\documentclass[a4paper,11pt,twoside]{article}
\usepackage{latexsym,amsfonts,index,amssymb}
\usepackage[centertags]{amsmath}
\usepackage{amstext}
\usepackage{xspace}
\usepackage{cases}
\usepackage{enumitem}
\usepackage{bbold}

\pagestyle{myheadings}

\def\titlerunning#1{\gdef\titrun{#1}}
\makeatletter
\def\author#1{\gdef\autrun{\def\and{\unskip, }#1}\gdef\@author{#1}}
\def\address#1{{\def\and{\\\hspace*{18pt}}\renewcommand{\thefootnote}{}%
\footnote {#1}}%
\markboth{\autrun}{\titrun}} \makeatother
\def\email#1{e-mail: #1}
\def\subjclass#1{{\renewcommand{\thefootnote}{}%
\footnote{\emph{Mathematics Subject Classification (2010):} #1}}}
\def\keywords#1{\par\medskip
\noindent\textbf{Keywords.} #1}

\setenumerate[1]{label=$(\alph*)$,leftmargin=*}
\setenumerate[2]{label=$(\roman*)$,leftmargin=*}

\DeclareMathAlphabet{\mathpzc}{OT1}{pzc}{m}{it}

\numberwithin{equation}{section}

 \topmargin -1cm
\addtolength{\textwidth}{3cm} \addtolength{\textheight}{2cm}
 \evensidemargin 2mm
 \oddsidemargin 2mm

\def\N{{\mathbb N}}
\def\Z{{\mathbb Z}}
\newfont{\sss}{cmssi10 at 11pt}
\newfont{\bss}{cmssbx10 at 11pt}
\newfont{\tit}{cmitt10 at 11pt}
%%%% outras definicoes  %%%%

\newcommand{\A}{{\bf A}}
\newcommand{\Se}{{\bf S}}

\newcommand{\LI}{{\bf LI}}
\newcommand{\LG}{{\bf LG}}
\newcommand{\LH}{{\bf LH}}
\newcommand{\Sl}{{\bf Sl}}
\newcommand{\LSl}{{\bf LSl}}

\newcommand{\D}{{\bf D}}
\newcommand{\G}{{\bf G}}
\newcommand{\He}{{\bf H}}

%%%%%%%%%%%%%%prod. malcev%%%%%%%%%%%%%%%

%%%%%%%%%%%prod. malcev%%%%%%%%%%%%%%%%%
\newcommand{\V}{{\bf V}}

\newcommand{\kt}{$\kappa$-term}

\newcommand{\mfrg}[3] {{#1}:{#2}\mathop{\hbox{\kern5pt$\circ$\kern-12pt\raise0.1pt\hbox
{$\longrightarrow$}}}{#3}}

\newcommand{\infee}[2]{{#1}_{#2}^{\!{\scriptscriptstyle{-\!\infty}}}}

\newcommand{\infdd}[2]{{#1}_{#2}^{\!\scriptscriptstyle{+\!\infty}}}

\newtheorem{theorem}{Theorem}[section]
\newtheorem{proposition}[theorem]{Proposition}
\newtheorem{corollary}[theorem]{Corollary}

\newtheorem{lemma}[theorem]{Lemma}
\newtheorem{claim}{Claim}

\newtheorem{definition}[theorem]{Definition}

\newtheorem{example}[theorem]{Example}
\newenvironment{definition*}{\begin{trivlist}\item[\hskip
    \labelsep{\bf Definition\quad}]}%
  {\hfill\qed\end{trivlist}}
\newenvironment{notation*}{\begin{trivlist}\item[\hskip
    \labelsep{\bf Notation\quad}]}%
  {\end{trivlist}}

  \def\qed{{\unskip\nobreak\hfil\penalty50\hskip .001pt\hbox{}%
      \nobreak\hfil
      \vrule height 1.2ex width 1.1ex depth -.1ex
      \parfillskip=0pt\finalhyphendemerits=0\medbreak}}
\newenvironment{proof}{\begin{trivlist}\item[\hskip%
     \labelsep{\bf Proof.\quad}]}%
 {\hfill\qed\rm\end{trivlist}}

  {\hfill\qed\end{trivlist}}%

\begin{document}

%% In the running head, give an abbreviation of the title.
\titlerunning{Semigroup presentations for test local groups}

\title{Semigroup presentations for test local groups}

\author{J. C. Costa %
  \and %
  C. Nogueira %
  \and %
  M. L. Teixeira%
}

\date{March 9, 2014}

\maketitle

\address{ %
  J. C. Costa \& M. L. Teixeira: %
  CMAT, Dep.\ Matem\'{a}tica e Aplica\c{c}\~{o}es, Universidade do Minho, Campus
  de Gualtar, 4700-320 Braga, Portugal; %
  \email{jcosta@math.uminho.pt, mlurdes@math.uminho.pt} %
  \and %
  C. Nogueira: %
   CMAT, Escola Superior de Tecnologia e Gest\~ao,
  Instituto Polit\'ecnico de Leiria,
  Campus 2, Morro do Lena, Alto Vieiro, 2411-901
   Leiria, Portugal; %
   \email{conceicao.veloso@ipleiria.pt} %
}

\subjclass{20M05, 20M07}

\begin{abstract}
  In this paper we exhibit a type of semigroup
presentations which determines a class of local groups. We show that
the finite elements of this class generate the pseudovariety ${\bf
LG}$ of all finite local groups and use them as test-semigroups to
prove that ${\bf LG}$ and $\Se$, the pseudovariety of all finite
semigroups, verify the same $\kappa$-identities involving
$\kappa$-terms of rank at most 1, where $\kappa$ denotes the
implicit signature consisting of the multiplication and the
$(\omega-1)$-power.

  \keywords{Local group, semigroup presentation, Rees matrix semigroup, pseudovariety,
  $\kappa$-term, canonical form.}
\end{abstract}

%\tableofcontents
\section{Introduction}
Let $A$ be an alphabet. We denote by  $A^*$ the free monoid  of all finite words in $A$ (including the empty
word $1$) and by $A^+$ the free semigroup of all finite non-empty words in $A$. A \textit{semigroup
presentation} is an ordered pair $\langle A\mid R\rangle$, with $R\subseteq A^+\times A^+$. A semigroup $S$ is
said to be defined by the presentation $\langle A\mid R\rangle$ if $S$ is isomorphic to the quotient semigroup
$A^+/\rho_R$, where $\rho_R$ is the smallest congruence on $A^+$ containing $R$.  If $(u,v)\in\rho_R$ then $u$
and $v$ \textit{represent} the same element of $S$ and so it is usual to denote $(u,v)$ by $u=v$. However, when
we want to avoid ambiguity, we denote $u\equiv v$ to assure that $u$ and $v$ are precisely the same word over
$A$. See~\cite{Lallement:1979} for an introduction to this topic.

A local group is a semigroup $S$ such that $eSe$ is a group for each
idempotent $e$ of $S$. The finite local groups form a
 pseudovariety (i.e., a class of finite semigroups closed
under taking subsemigroups, homomorphic images and finite direct products) usually denoted by ${\bf LG}$. More
generally, if $\He$ is a pseudovariety of groups then ${\bf LH}$ denotes the pseudovariety of all finite
semigroups $S$ such that $eSe\in\He$ for each idempotent $e$ of $S$, and we recall that $\LH={\bf
H*D}$~\cite{Straubing:1985} where $\D$ is the pseudovariety of all finite semigroups whose idempotents are right
zeros. It is well known (see~\cite{Straubing:1985} for a proof) that a finite semigroup $S$ is a local group if
and only if all the idempotents of $S$ lie in the minimal ideal of $S$. A proof of the generalization of this
result to arbitrary semigroups can be found in Proposition~\ref{prop:caracterization_local_group} below. There,
a semigroup $S$ is characterized as being a local group if and only if $S$ has no idempotents or $S$ has a
minimal ideal $J$ which is a completely simple semigroup that contains  all the idempotents of $S$. In this
case, by the Rees-Suschkewitsch Theorem, $J$ is isomorphic to a Rees matrix semigroup over a group (the maximal
subgroup of $S$). In~\cite{Howie&Ruskuc:1994}, Howie and Ru\v{s}kuc showed how to find a semigroup presentation
for a Rees matrix semigroup $\mathcal{M}[G;I,\Lambda;P]$ given a semigroup presentation for the group $G$.

 In~\cite{Almeida&Azevedo:1993} (see
also~\cite[Section 10.6]{Almeida:1992}), Almeida and Azevedo showed
that a semidirect product of the form $\V*\D$, with the
pseudovariety $\V$ not locally trivial, is generated by a class
formed by certain semigroups $M_k(S,{\hbar})$ with $k\geq 1$,
$S\in\V$, $A$ an alphabet and ${\hbar}:A^+\rightarrow S$ a
$k$-superposition homomorphism. Therefore, possible properties of
$\V*\D$ may be tested on the semigroups $M_k(S,{\hbar})$ and
Almeida and Azevedo applied those test-semigroups (an expression
used in~\cite{Almeida:1992}) to obtain a representation of the free
pro-$(\V*\D)$ semigroup over $A$.

In this paper, we introduce a class of local groups
$\mathcal{S}(G,L,{\mathsf f})$ with $G$ a group, $L\subseteq A^+$ a
factorial language and ${\mathsf f}:L\cup \ddot{L}\rightarrow G$ a
function where, informally speaking, $\ddot{L}$ is formed by the
words over $A$ with minimal length that do not belong to $L$. The
group $G$ is the maximal subgroup of $\mathcal{S}(G,L,{\mathsf f})$,
 $L$ is the set of non-regular elements of $\mathcal{S}(G,L,{\mathsf f})$ and ${\mathsf f}$
 serves to define the operation of the semigroup. Given a semigroup presentation for $G$, we
 describe a semigroup presentation for $\mathcal{S}(G,L,{\mathsf f})$. In particular, when $G$ is finitely
presented, the semigroup presentation for $\mathcal{S}(G,L,{\mathsf f})$ is finite if and only if $L$ is finite.
For  a finite group $G$, the semigroup $M_k(G,{\hbar})$ is a local group. We show that, when $L_k$ is the
language of all words over $A$ of length at most $k$ and ${\mathsf f}$  is  the restriction of ${\hbar}$ to
$L\cup \ddot{L}$, $M_k(G,{\hbar})$ is a subsemigroup of a homomorphic image of $S_k(G,{\mathsf
f})=\mathcal{S}(G,L_k,{\mathsf f})$. As a consequence of the above mentioned results of Almeida and Azevedo, we
deduce that the semigroups $S_k(G,{\mathsf f})$, with $k\geq 1$ and $G\in\He$, form a generating set of the
pseudovariety $\LH$. Obviously, the more general class of semigroups $\mathcal{S}(G,L,{\mathsf f})$, with
$G\in\He$ and $L$ finite, generates $\LH$. While the semigroups $S_k(G,{\mathsf f})$ and $M_k(G,{\hbar})$ have essentially the same
capabilities as test-semigroups for ${\bf LH}$, the semigroups $\mathcal{S}(G,L,{\mathsf f})$
have the advantage that one may explore the possibility of choosing appropriate languages and functions
${\mathsf f}$ to test specific properties of ${\bf
 LH}$. This makes the semigroups
$\mathcal{S}(G,L,{\mathsf f})$ interesting
 alternatives to the semigroups $M_k(G,{\hbar})$.

 We will use the semigroups $\mathcal{S}(G,L,{\mathsf f})$ to show
that ${\bf LG}$ and $\Se$, the pseudovariety of all finite
semigroups, satisfy the same identities involving $\kappa$-terms of
rank at most 1 (i.e., terms obtained from ones of the form $u$ and
$v^{\omega-1}$, with $u\in A^*$ and $v\in A^+$, by finite
concatenation) and that these identities are decidable over $\LG$
(and $\Se$). The semigroups $\mathcal{S}(G,L,{\mathsf f})$  will be
employed in a more general context
in~\cite{Costa&Nogueira&Teixeira:2013} to solve the word problem for
identities involving arbitrary $\kappa$-terms over $\LG$. Recall
that this type of word problem is already solved, for instance, for
the pseudovarieties $\LI$ of locally trivial
semigroups~\cite{Almeida:1992} and $\LSl$ of local
semilattices~\cite{Costa:2001}. Althought the pseudovariety $\Sl$ of
finite semilattices is not a pseudovariety of groups, the equality
$\LSl=\Sl*\D$ holds and the representations of free pro-$(\Sl*\D)$
semigroups, obtained by Almeida and Azevedo, were used to solve the
above mentioned word problem as well as to prove other properties of
$\LSl$~\cite{Costa&Teixeira:2004,Costa&Nogueira:2009}.

%%%%%%%%%%%%%%%%%%%%%%%%%%%%%%%%%%%%%%%%%%%%%%%%%%%%%%%%%%%%%%%%%%%%%%%%%%%%%%%%%%%%%%%%%%%%%%%%%%%%%%%%%
\section{Semigroup presentations for a class of local groups}\label{Presentations-local-groups}
 In this section we introduce a certain type of semigroup presentation
 and prove that the semigroups they define are local
 groups. For an introduction to combinatorics on words, the reader is
referred to~\cite{Lothaire:2002}.

We begin by giving a characterization of local groups. Recall that a semigroup is \emph{simple} if it has a
unique ideal. The set of idempotents of a semigroup $S$, denoted by $E(S)$, is endowed with a natural (partial)
order relation defined by the rule that $e\leq f$ if and only if $ef=fe=e$. A simple semigroup $S$ is said to be
\emph{completely simple} if $E(S)$ has minimal elements for the relation $\leq$.
\begin{proposition}\label{prop:caracterization_local_group}
A semigroup $S$ is a local group if and only if $E(S)=\emptyset$ or $S$ has a completely simple minimal ideal
containing $E(S)$.
\end{proposition}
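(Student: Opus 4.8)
The plan is to prove both implications directly from the definition of local group, keeping careful track of where idempotents sit. For the forward implication, suppose $S$ is a local group with $E(S)\neq\emptyset$. The first step is to show that all idempotents of $S$ are mutually $\mathcal{J}$-equivalent and in fact generate a single minimal ideal. Given $e,f\in E(S)$, consider the idempotent-generated structure around the product: I would look at $efe$, which lies in $eSe$, hence is a member of a group with identity $e$; since $efe$ is idempotent in that group (as $(efe)^2 = ef(ef)e$ need not obviously equal $efe$, so more care is needed), the cleaner route is to use that in a local group each $eSe$ is a group, so $e$ is a primitive idempotent — there is no idempotent strictly below $e$. Indeed if $f\le e$ then $f = ef = fe \in eSe$, and an idempotent in a group forces $f=e$. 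Thus every idempotent of $S$ is primitive. The second step is to produce the minimal ideal: I would show $ES E$... more efficiently, fix any $e\in E(S)$ and show $SeS$ is the minimal ideal and is completely simple, using primitivity of $e$ together with the fact that every idempotent $f$ satisfies $f\in SeS$ (because $f$ and $e$, both primitive, must be $\mathcal{J}$-related — here one invokes standard finite-like reasoning via the $\mathcal{D}=\mathcal{J}$-type arguments, but for arbitrary semigroups one should instead argue: the ideal $SeS$ contains an idempotent $e$ that is primitive, so by the Rees–Suschkewitsch theory $SeS$ is completely simple; and any other idempotent $f$, being primitive and satisfying $efe\in$ a group, must already lie in $SeS$).

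The subtle point — and I expect this to be the main obstacle — is the passage from "$S$ is a local group" to "$S$ has a \emph{minimal} ideal at all", since arbitrary (infinite) semigroups need not have minimal ideals. The resolution is that the existence of even one primitive idempotent $e$ forces the principal ideal $SeS$ (or rather $S^1eS^1$) to be $0$-minimal or minimal: one shows every element of $SeS$ is $\mathcal{J}$-equivalent to $e$, using that $e$ is primitive and that $ese\in eSe$ is a unit in a group for all $s$. This is the classical argument that a semigroup with a primitive idempotent whose principal factor is the whole thing is completely simple; I would cite or reproduce the Rees–Suschkewitsch machinery here, checking that "completely simple" in the sense defined in the excerpt (simple $+$ minimal idempotents exist) matches. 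Once $SeS$ is completely simple, it contains idempotents, it is simple, and it is an ideal of $S$; any ideal $K$ of $S$ meets $SeS$ (intersect and use that ideals of $S$ restricted to the simple semigroup $SeS$ are all of $SeS$), so $SeS\subseteq K$, giving minimality. Finally every idempotent of $S$ lies in $SeS$ by the primitivity comparison, so $E(S)\subseteq SeS$.

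For the converse, suppose $E(S)=\emptyset$ or $S$ has a completely simple minimal ideal $J\supseteq E(S)$. In the first case $S$ is trivially a local group (the condition on idempotents is vacuous). In the second case, let $e\in E(S)$; then $e\in J$, and I must show $eSe$ is a group. Since $J$ is an ideal, $eSe\subseteq eJe$... actually $eSe = eS^1e\cap\cdots$; more directly, for $s\in S$ the element $ese$ lies in $J$ (as $J$ is an ideal and $e\in J$), so $eSe\subseteq eJe$, and conversely $eJe\subseteq eSe$, giving $eSe = eJe$. Now $J$ is completely simple, so by Rees–Suschkewitsch $J\cong\mathcal{M}[G;I,\Lambda;P]$, and the local submonoid $eJe$ at an idempotent of a completely simple semigroup is a group isomorphic to $G$ — this is the standard fact that Green's $\mathcal{H}$-class of an idempotent in a completely simple semigroup is a group and coincides with $eJe$. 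Hence $eSe$ is a group, and $S$ is a local group.

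**Remark on references.** In writing the final version I would make explicit which parts rely on the Rees–Suschkewitsch structure theorem for completely simple semigroups and which are elementary, and I would double-check the equivalence between the excerpt's definition of "completely simple" (simple with minimal idempotents) and the more common "simple with a primitive idempotent"; these agree, and the short argument is that a minimal element of $(E(S),\le)$ is exactly a primitive idempotent in a simple semigroup. The only genuinely delicate step remains guaranteeing the existence of a minimal ideal in the forward direction without finiteness hypotheses, which is handled entirely by the primitive-idempotent argument sketched above.
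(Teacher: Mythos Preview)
Your proposal is correct and uses the same ingredients as the paper (primitivity of idempotents, the group structure of $eSe$, and Rees--Suschkewitsch for the converse), but the forward direction can be organized more directly. You establish primitivity first and then labor to show that $S^1eS^1$ is simple and minimal, checking that every ideal meets it; the paper instead observes in one line that for \emph{every} $a\in S$ the element $eae$ lies in the group $eSe$ and hence has an inverse $ea'e$ there, so $e=(eae)(ea'e)\in S^1aS^1=J_a$, giving $J_e\subseteq J_a$ for all $a$. This immediately makes $J_e$ the minimal ideal of $S$, with no separate simplicity argument and no need to verify that arbitrary ideals intersect $SeS$. Primitivity is then checked exactly as you do (an idempotent in the group $fSf$ must be its identity $f$), which makes the minimal ideal completely simple. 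The converse direction is essentially identical in both: $e\in J$ and $J$ an ideal force $eSe=eJe$, and the Rees--Suschkewitsch structure of $J$ identifies this with the $\mathcal H$-class $H_e\cong G$.
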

\begin{proof} Let $S$ be a local group and suppose that $E(S)$ is non-empty. Denote by $J_a$
the principal ideal $S^1aS^1$ generated by an element $a$ of $S$. Let $e\in E(S)$. Then, by definition of local
group, the local submonoid $eSe$ is a group with identity $e$. Hence, for each $a\in S$ there exists $a'\in S$
such that $(eae)(ea'e)=e$. This means that, for all $a\in S$, $e\in J_a$ and so $J_e\subseteq J_a$. Since $e$ is
an arbitrary idempotent, it follows that $J=J_e$ is a minimal ideal of $S$ containing $E(S)$. We now claim that
every element of $E(S)$ is minimal for the relation $\leq$. Indeed, let $e,f\in E(S)$ and suppose that $e\leq
f$, that is, suppose that $ef=fe=e$. Then $fef=e$, whence $e$ belongs to the group $fSf$ with identity $f$. The
fact that $e$ is an idempotent shows that $e=f$ and proves the claim. We conclude that $J$ is a completely
simple semigroup.

Reciprocally, if $E(S)=\emptyset$ then $S$ is (trivially) a local group. Suppose now that $E(S)$ is non-empty,
that $S$ has a completely simple minimal ideal $J$ and that $E(S)$ is a subset of $J$. In particular, by the
Rees-Suschkewitsch Theorem, $J$ is isomorphic to a Rees matrix semigroup over a group $G$. Moreover, since all
the idempotents of $S$ are in $J$, one deduces that each local submonoid $eSe$ is $H_e$, the $\mathcal{H}$-class
of $e$, a group isomorphic to $G$, thus proving that $S$ is a local group.
\end{proof}

Consider a word $w=a_1a_2\ldots a_n\in A^+$ ($a_i\in A$) of length
$n\geq 1$. For $1\leq p\leq q\leq n$, we denote
$w[p,q]=a_pa_{p+1}\cdots a_q$. A word $u\in A^+$ is a  (non-empty)
\emph{factor} of  $w$
 if $u=w[p,q]$ for some $p$ and $q$. In this case $w[p,q]$ is said to be an
 {\em occurrence} of the factor $u$ in $w$. We will say also  ``an occurrence $u=w[p,q]$ in
 $w$'' instead of ``an occurrence $w[p,q]$ of $u$ in $w$''. If there is an
occurrence $u=w[p,q]$  with $p=1$ (resp.\ $q=n$), then $u$ is called
a \emph{prefix}
 (resp.\ a \emph{suffix}) of $w$. We denote by $w_{\alpha}$ and $w_{\omega}$,
respectively, the prefix $w[1,n-1]$ and the suffix $w[2,n]$ of $w$
of length $n-1$. A language $L\subseteq A^+$ is said to be {\em factorial}
if it is closed under taking non-empty factors. Let
  $L$ be a non-empty factorial language and let
$$\ddot{L}=\{v\in A^+:\mbox{$v\not\in
L$ and $v_{\alpha},v_{\omega}\in L$}\}.$$ We assume that $L$ has content $A$ (i.e., $A\subseteq L$) and observe
that $L$ and $\ddot{L}$ are disjoint languages. Observe also the following elementary facts
\begin{eqnarray}
\forall v_1,v_2\in \ddot{L}& (\mbox{$v_1$ is a factor of
$v_2$}\;\Leftrightarrow\;v_1=v_2);\label{eq:ddotL-1}\\
\forall w\in A^+&(\ddot{L}\cap F(w)=\emptyset
\;\Leftrightarrow\;w\in L).\label{eq:ddotL-2}
\end{eqnarray}
From~\eqref{eq:ddotL-2} it follows that $\ddot{L}=\emptyset$ if and
only if $L=A^+$. We associate to any given word $w=a_1a_2\ldots
a_n\in A^+$ ($a_i\in A$) a well-determined finite sequence
$$\mbox{sc}_L[w]=(w_0,\ddot w_1,w_1,\ldots,\ddot w_m,w_m),$$ called
the {\em sequence of coordinates of $w$ determined by $L$}, as
follows:
\begin{itemize}
\item $m\geq 0$ is the number, called the
$\ddot L$-length of $w$, of occurrences of elements of $\ddot{L}$ in
$w$. Observe that $m=0$ if and only if  $w\in L$, in which case
$\mbox{sc}_L[w]=(w_0)$.

\item if $m>0$ then $\ddot w_1=w[p_1,q_1],\ldots ,\ddot w_m=w[p_m,q_m]$ are the successive occurrences of factors of $w$ that belong to
$\ddot{L}$. Notice that, for every $i$, $p_i<q_i$  and,
by~\eqref{eq:ddotL-1}, the integer interval $[p_i,\ldots,q_i]$ is contained in $[p_j,\ldots,q_j]$ for some $j$ if and only if $i=j$.

\item if $m>0$ then $w_0=w[1,q_1-1]$, $w_m=w[p_m+1, n]$ and $w_i=w[p_i+1,q_{i+1}-1]$ for
$0<i<m$. We note that $w_0a_{q_1}=w[1,q_1]$ does not belong to $L$ (since neither does $w[p_1,q_1]$ and $L$ is
factorial), whence $w_0$ is the longest prefix of $w$ in $L$. Analogously, $w_m$ is the longest suffix of $w$
that belongs to $L$. Moreover, for $0<i<m$, the factors  $w_ia_{q_{i+1}}=w[p_i+1,q_{i+1}]$ and
$a_{p_i}w_i=w[p_i,q_{i+1}-1]$ do not belong to $L$. We then say that $w_0, w_1,\ldots, w_m$ are \emph{maximal}
factors of $w$ in $L$. With this sense of maximality, one may verify that $w_0, w_1,\ldots, w_m$ is the sequence
of all maximal factors of $w$ in $L$ by the order they occur in $w$.
\end{itemize}

Alternatively, the sequence $\mbox{sc}_L[w]$ may be constructed by
the recursive application of the two following steps:
\begin{itemize}
\item If $w\in L$ then  let $\mbox{sc}_L[w]=(w)$.

\item If $w\not\in L$ then select an occurrence $\ddot v=w[p,q]$ in $w$ of a factor $\ddot v\in \ddot L$ and let
$\mbox{sc}_L[w]=\mbox{sc}_L[z_1](\ddot v)\mbox{sc}_L[z_2]$, where
$z_1=w[1,q-1]$ and $z_2=w[p+1,n]$.
\end{itemize}

\begin{example}\label{examp:langL} For $A=\{a,b\}$, consider the  factorial languages $L_1=\{a,b,a^2,ab,b^2,a^3,a^2b\}$
 and $L_2=\{a^i,a^jba^k:i>0,j\geq 0,k\geq 0\}$ over $A$. Then $\ddot{L}_1=\{ba,ab^2,b^3,a^4,a^3b\}$ and $\ddot{L}_2=\{ba^jb:j\geq 0\}$.
 For the words $u=a^5ba^2$ and $v=ab^3aba^4b$, we have
 $$\begin{array}{l}
 \mbox{sc}_{L_1}[u]=(a^3,a^4,a^3,a^4,a^3,a^3b,a^2b,ba,a^2),\\ \mbox{sc}_{L_2}[u]=(u),\\
 \mbox{sc}_{L_1}[v]=(ab,ab^2,b^2,b^3,b^2,ba,ab,ba,a^3,a^4,a^3,a^3b,a^2b),\\
\mbox{sc}_{L_2}[v]=(ab,b^2,b,b^2,ba,bab, aba^4,ba^4b,a^4b).
  \end{array}$$
\end{example}

 The above construction may be
 reverted. More precisely, given $\mbox{sc}_L[w]$ there is a deterministic procedure to calculate the word $w$.
 This shows in particular that
\begin{equation}\label{eq:injectivity_scL}
\forall w,z\in A^+\ (\mbox{sc}_L[w]=\mbox{sc}_L[z]\;\Leftrightarrow\;w=z).
\end{equation}
 The procedure is the following. If $\mbox{sc}_L[w]=(w_0)$, then
 $w=w_0$. Suppose now that $\mbox{sc}_L[w]=(w_0,\ddot w_1,w_1,\ldots,\ddot w_m,w_m)$ with $m\geq 1$ and assume that is
 possible to determine a word $z$ of $\ddot L$-length $m-1$ given its sequence of coordinates $\mbox{sc}_L[z]$. By the above construction,
 $w_0=w'(\ddot w_1)_\alpha$ for some  $w'\in A^*$ and the sequence $(w_1,\ddot w_2,\ldots,\ddot w_m,w_m)$ is precisely
 $\mbox{sc}_L[z]$ where $z=(\ddot w_1)_\omega w''$ is such that $w=w'\ddot w_1w''$. Since $w_0$ and $\ddot w_1$ are given, we may determine $w'$.
  On the other hand, by hypothesis, $z$ is calculable and, so, also is $w''$. Therefore $w$ is calculable.

Let $\langle A_G\mid R_G\rangle$ be a semigroup presentation  for a
group $G$, so that $G\cong A_G^+/\rho_{R_G}$. For simplicity of
notation, we will usually regard a given word $w\in A_G^+$ as the
element of $G$ it represents. On the other hand, by choosing for
each element $g\in G$ a word of $A_G^+$ representing $g$  we may
view $G$ as a subset of $A_G^+$. In particular, we denote by $e$ a
word of $A_G^+$ representing $1_G$, whence
\begin{equation}\label{eq:e_neutral element}
\forall w\in A_G^+,\mbox{ $w=ew=we$ in $G$.}
\end{equation}
Let $X=A\cup A_G$, let $L^1=L\cup\{1\}$ and let ${\mathsf f}:L\cup \ddot{L}\rightarrow G$ be a function. We
associate to ${\mathsf f}$ four new functions $\check{\mathsf f}:A^+\rightarrow L\cup LGL$, $\acute{\mathsf
f}:A^*\rightarrow L^1\cup GL$, $\grave{\mathsf f}:A^*\rightarrow L^1\cup LG$ and $\hat{\mathsf f}:A^*\rightarrow
G$ defined as follows: $\acute{\mathsf f}(1)=\grave{\mathsf f}(1)=1$ and $\hat{\mathsf f}(1)=1_G$; if $w\in L$
then $\check{\mathsf f}(w)=\acute{\mathsf f}(w)=\grave{\mathsf f}(w)=w$ and $\hat{\mathsf f}(w)={\mathsf f}(w)$;
if $w\in A^+\setminus L$ and $\mbox{sc}_L[w]=(w_0,\ddot w_1,w_1,\ldots,\ddot w_m,w_m)$, then
$$\begin{array}{rl}
\check{\mathsf f}(w)&\hspace*{-3mm}= w_0{\mathsf f}(\ddot
w_1){\mathsf f}(w_1){\mathsf f}(\ddot w_2)\cdots {\mathsf f}(\ddot
w_m)w_m,\\
\acute{\mathsf f}(w)&\hspace*{-3mm}= {\mathsf f}(w_0){\mathsf
f}(\ddot w_1){\mathsf
f}(w_1){\mathsf f}(\ddot w_2)\cdots  {\mathsf f}(\ddot w_m)w_m,\\
\grave{\mathsf f}(w)&\hspace*{-3mm}= w_0{\mathsf f}(\ddot
w_1){\mathsf f}(w_1){\mathsf f}(\ddot w_2)\cdots {\mathsf f}(\ddot
w_m){\mathsf f}(w_m),\\
\hat{\mathsf f}(w)& \hspace*{-3mm}= {\mathsf f}(w_0){\mathsf
f}(\ddot w_1){\mathsf f}(w_1){\mathsf f}(\ddot w_2)\cdots  {\mathsf
f}(\ddot w_m){\mathsf f}(w_m).
\end{array}$$
Now, we let $\check{\mathsf f}:X^+\rightarrow L\cup L^1GL^1$ be the
extension to $X^+$ of the above function $\check{\mathsf f}$ by
setting $\check{\mathsf f}(w)=\grave{\mathsf f}(u_0)g_1\hat{\mathsf
f}(u_1)\cdots g_{n-1}\hat{\mathsf f}(u_{n-1}) g_n\acute{\mathsf
f}(u_n)$ for each word $w=u_0g_1u_1\cdots g_nu_n\in X^+\setminus
A^+$, with $u_0,u_n\in A^*$, $u_1,\ldots,u_{n-1}\in A^+$ and
$g_1,\ldots,g_n\in A_G^+$.

It is worth observing the following properties of the functions
$\hat{\mathsf f}$ and $\check{\mathsf f}$.

\begin{lemma}\label{lemma:property_hat_function}
Let $w,z\in A^+$ and let $z_0\in L$ be the first coordinate of $z$
and $w_m\in L$ be the last coordinate of $w$ determined by $L$.
Then,
\begin{enumerate}
  \item \label{eq:property_hat_function}  $\hat{\mathsf f}(wz)=\hat{\mathsf
f}(wz_0){\mathsf f}(z_0)^{-1}\hat{\mathsf f}(z)=\hat{\mathsf
f}(w){\mathsf f}(w_m)^{-1}\hat{\mathsf f}(w_mz)$;
  \item \label{eq:property_check_function}
$ \check{\mathsf f}(wz)=\check{\mathsf f}(\check{\mathsf
f}(w)\check{\mathsf f}(z))$.
\end{enumerate}
\end{lemma}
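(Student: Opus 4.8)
The plan is to derive both identities from a single description of the sequence of coordinates of a product. Write $\mbox{sc}_L[w]=(w_0,\ddot w_1,w_1,\dots,\ddot w_k,w_m)$ (so $w_m$ is the last coordinate of $w$, as in the statement) and $\mbox{sc}_L[z]=(z_0,\ddot z_1,z_1,\dots,\ddot z_l,z_l)$, and let $\mbox{sc}_L[w_mz_0]=(y_0,\ddot y_1,y_1,\dots,\ddot y_r,y_r)$. The key claim, which I would prove first, is that $\mbox{sc}_L[wz]$ is the concatenation of $(w_0,\ddot w_1,w_1,\dots,\ddot w_k)$, of $\mbox{sc}_L[w_mz_0]$, and of $(\ddot z_1,z_1,\dots,\ddot z_l,z_l)$; informally, one deletes the last coordinate of $\mbox{sc}_L[w]$, the first coordinate of $\mbox{sc}_L[z]$, and splices $\mbox{sc}_L[w_mz_0]$ in between. (When $w\in L$ the first block is empty and $w_m=w$; when $z\in L$ the third block is empty and $z_0=z$; in either case the statement still reads correctly.) By~\eqref{eq:injectivity_scL} this pins down $\mbox{sc}_L[wz]$, and the two assertions then reduce to bookkeeping with $\hat{\mathsf f}$ and $\check{\mathsf f}$.

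To prove the claim I would classify the occurrences of factors of $\ddot L$ in $wz$ according as they are contained in the prefix $w$, contained in the suffix $z$, or straddle the cut at position $|w|$; the first two families are $\ddot w_1,\dots,\ddot w_k$ and the translates of $\ddot z_1,\dots,\ddot z_l$. If $wz[p,q]\in\ddot L$ with $p\le|w|<q$, then $wz[p,q-1],wz[p+1,q]\in L$, and, $L$ being factorial, the suffix $w[p,|w|]$ of $w$ and the prefix $z[1,q-|w|]$ of $z$ lie in $L$; maximality of $w_m$ and of $z_0$ forces $|w|-|w_m|<p$ and $q\le|w|+|z_0|$, so the occurrence lies inside the window $w_mz_0=wz\big[\,|w|-|w_m|+1,\,|w|+|z_0|\,\big]$. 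Conversely, every occurrence of a factor of $\ddot L$ inside that window straddles the cut, because $w_m,z_0\in L$ and $L$ is factorial; hence the straddling occurrences are precisely $\ddot y_1,\dots,\ddot y_r$, read inside $wz$. Comparing left endpoints (that of $\ddot w_k$ is $|w|-|w_m|$; those of the $\ddot y_i$ lie between $|w|-|w_m|+1$ and $|w|$; those of the $\ddot z_j$ exceed $|w|$) shows the occurrences are met in $wz$ in the order $\ddot w_1,\dots,\ddot w_k,\ddot y_1,\dots,\ddot y_r,\ddot z_1,\dots,\ddot z_l$; and unwinding the defining formulas for the coordinates one checks that the coordinate of $wz$ lying between $\ddot w_k$ and $\ddot y_1$ is $y_0$, the one lying between $\ddot y_r$ and $\ddot z_1$ is $y_r$, and every remaining coordinate of $wz$ coincides with the matching coordinate of $w$, of $w_mz_0$, or of $z$. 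This establishes the claim.

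Granting the claim, item~\ref{eq:property_hat_function} follows at once. Since $\hat{\mathsf f}(u)$ is the product in $G$ of the $\mathsf f$-images of the successive entries of $\mbox{sc}_L[u]$, the claim gives
\[
\hat{\mathsf f}(wz)=\hat{\mathsf f}(w)\,{\mathsf f}(w_m)^{-1}\cdot\hat{\mathsf f}(w_mz_0)\cdot{\mathsf f}(z_0)^{-1}\hat{\mathsf f}(z),
\]
the factors ${\mathsf f}(w_m)^{-1}$ and ${\mathsf f}(z_0)^{-1}$ undoing the two deleted entries. Applying this with $z$ replaced by its prefix $z_0\in L$ (whose first coordinate is $z_0$ itself, with $\hat{\mathsf f}(z_0)={\mathsf f}(z_0)$) collapses the last two factors and gives $\hat{\mathsf f}(wz_0)=\hat{\mathsf f}(w){\mathsf f}(w_m)^{-1}\hat{\mathsf f}(w_mz_0)$, hence the first equality; applying it with $w$ replaced by its suffix $w_m\in L$ gives $\hat{\mathsf f}(w_mz)=\hat{\mathsf f}(w_mz_0){\mathsf f}(z_0)^{-1}\hat{\mathsf f}(z)$, hence the second.

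For item~\ref{eq:property_check_function}: if $w,z\in L$ then $\check{\mathsf f}(w)\check{\mathsf f}(z)=wz\in A^+$ and both sides are $\check{\mathsf f}(wz)$, so assume $wz\notin L$; I treat the case $w,z\notin L$, the cases in which exactly one of $w,z$ lies in $L$ being the same but shorter (there the boundary factor is again $w_mz_0$, and $\grave{\mathsf f}$ or $\acute{\mathsf f}$ applied to it does the bookkeeping). Then $\check{\mathsf f}(w)=w_0\,g_w\,w_m$, where $g_w\in A_G^+$ is the concatenation of the $\mathsf f$-images of the entries of $\mbox{sc}_L[w]$ other than $w_0$ and $w_m$, and likewise $\check{\mathsf f}(z)=z_0\,g_z\,z_l$; so $\check{\mathsf f}(w)\check{\mathsf f}(z)=w_0\,g_w\,(w_mz_0)\,g_z\,z_l$, a word of $X^+$ whose maximal $A$-blocks are $w_0$, $w_mz_0$ and $z_l$. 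Substituting it into the definition of the $X^+$-extension of $\check{\mathsf f}$ — which replaces these blocks by $\grave{\mathsf f}(w_0)$, $\hat{\mathsf f}(w_mz_0)$, $\acute{\mathsf f}(z_l)$ so that they glue with the adjacent $A_G$-letters — and using $\grave{\mathsf f}(w_0)=w_0$, $\acute{\mathsf f}(z_l)=z_l$ (because $w_0,z_l\in L$) yields the word $w_0\,g_w\,\hat{\mathsf f}(w_mz_0)\,g_z\,z_l$. Since $\hat{\mathsf f}(w_mz_0)$ is the concatenation of the $\mathsf f$-images of all entries of $\mbox{sc}_L[w_mz_0]$, and since by the claim the entries of $\mbox{sc}_L[wz]$ other than its first ($w_0$) and last ($z_l$) are the interior entries of $\mbox{sc}_L[w]$, then all entries of $\mbox{sc}_L[w_mz_0]$, then the interior entries of $\mbox{sc}_L[z]$, this word is letter-for-letter $\check{\mathsf f}(wz)$. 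The only real difficulty is the structural claim itself — the positional argument confining the straddling $\ddot L$-occurrences to the window $w_mz_0$ and pinning down the two junction coordinates; once that is in hand, item~\ref{eq:property_hat_function} is pure cancellation in $G$ and item~\ref{eq:property_check_function} is a comparison of two words.
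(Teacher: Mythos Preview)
Your proposal is correct and follows essentially the same route as the paper: both hinge on the structural fact that $\mbox{sc}_L[wz]$ is obtained by splicing $\mbox{sc}_L[w_mz_0]$ between the truncated sequences $(w_0,\ddot w_1,\ldots,\ddot w_k)$ and $(\ddot z_1,\ldots,z_l)$, and both read off~\ref{eq:property_hat_function} and~\ref{eq:property_check_function} from that description. The paper simply asserts the spliced form of $\mbox{sc}_L[wz]$ (and of $\mbox{sc}_L[wz_0]$) and then computes; you supply the positional argument that confines the straddling $\ddot L$-occurrences to the window $w_mz_0$, which the paper leaves implicit. Your derivation of~\ref{eq:property_hat_function} via the intermediate ``master formula'' $\hat{\mathsf f}(wz)=\hat{\mathsf f}(w){\mathsf f}(w_m)^{-1}\hat{\mathsf f}(w_mz_0){\mathsf f}(z_0)^{-1}\hat{\mathsf f}(z)$ is a minor variation on the paper's direct computation, but not a different idea. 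One small wording issue: in~\ref{eq:property_check_function} the middle block of $\check{\mathsf f}(wz)$ is an element of $G$, not a word over $A_G$, so ``letter-for-letter'' should be read as equality in $L^1GL^1$; the substance of your comparison is unaffected.
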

 \begin{proof}  Let
 $$\begin{array}{rl}
 \mbox{sc}_L[w] &\hspace*{-2mm}= (w_0,\ddot w_1,w_1,\ldots,\ddot w_m,w_m), \\
 \mbox{sc}_L[z] &\hspace*{-2mm}= (z_0,\ddot z_1,z_1,\ldots,\ddot z_n,z_n),\\
  x &\hspace*{-2mm}= w_mz_0,\\
   \mbox{sc}_L[x] &\hspace*{-2mm}= (x_0,\ddot x_1,x_1,\ldots,\ddot x_p,x_p).
\end{array}$$
 Hence
  $$\begin{array}{rl}
 \mbox{sc}_L[wz_0] &\hspace*{-2mm}=(w_0,\ddot w_1,w_1,\ldots,\ddot
w_m,x_0,\ddot x_1,x_1,\ldots,\ddot x_p,x_p), \\
 \mbox{sc}_L[wz] &\hspace*{-2mm}= (w_0,\ddot w_1,w_1,\ldots,\ddot w_m,x_0,\ddot
x_1,x_1,\ldots,\ddot x_p,x_p ,\ddot z_1,z_1,\ldots,\ddot z_n,z_n).
\end{array}$$
Then,
$$\begin{array}{rl}
\hat{\mathsf f}(wz)&\hspace*{-2mm}= {\mathsf
f}(w_0){\mathsf f}(\ddot w_1){\mathsf f}(w_1)\cdots {\mathsf
f}(\ddot w_m){\mathsf f}(x_0){\mathsf f}(\ddot x_1){\mathsf
f}(x_1)\cdots {\mathsf f}(\ddot x_p){\mathsf f}(x_p) {\mathsf
f}(\ddot z_1){\mathsf f}(z_1)\cdots {\mathsf f}(\ddot
z_n){\mathsf f}(z_n)\\
&\hspace*{-2mm}=  \hat{\mathsf f}(wz_0){\mathsf
f}(z_0)^{-1} {\mathsf f}(z_0){\mathsf f}(\ddot z_1){\mathsf
f}(z_1)\cdots {\mathsf f}(\ddot
z_n){\mathsf f}(z_n)\\
&\hspace*{-2mm}= \hat{\mathsf f}(wz_0){\mathsf
f}(z_0)^{-1}\hat{\mathsf f}(z).
\end{array}$$
One can show analogously that  $\hat{\mathsf f}(wz)=\hat{\mathsf
f}(w){\mathsf f}(w_m)^{-1}\hat{\mathsf f}(w_mz)$, thus concluding
the proof of~\ref{eq:property_hat_function}.

If $m=n=0$ then  $w,z\in L$ and~\ref{eq:property_check_function} is
trivially verified. When $m,n>0$, we have
$$\begin{array}{rl}
\check{\mathsf f}(\check{\mathsf f}(w)\check{\mathsf f}(z))
&\hspace*{-2mm}=\check{\mathsf f}(w_0{\mathsf f}(\ddot w_1){\mathsf f}(w_1)\cdots
{\mathsf f}(\ddot w_m)w_mz_0{\mathsf f}(\ddot z_1){\mathsf
f}(z_1)\cdots {\mathsf f}(\ddot
z_n)z_n)\\
&\hspace*{-2mm}= w_0{\mathsf f}(\ddot w_1){\mathsf f}(w_1)\cdots {\mathsf
f}(\ddot w_m)\hat{\mathsf f}(w_mz_0){\mathsf f}(\ddot z_1){\mathsf
f}(z_1)\cdots {\mathsf f}(\ddot
z_n)z_n\\
&\hspace*{-2mm}=w_0{\mathsf f}(\ddot w_1){\mathsf f}(w_1)\cdots {\mathsf f}(\ddot
w_m){\mathsf f}(x_0){\mathsf f}(\ddot x_1){\mathsf f}(x_1)\cdots
{\mathsf f}(\ddot x_p){\mathsf f}(x_p) {\mathsf f}(\ddot
z_1){\mathsf f}(z_1)\cdots {\mathsf f}(\ddot
z_n)z_n\\
&\hspace*{-2mm}= \check{\mathsf f}(wz).
\end{array}$$
If $m=0$ and $n>0$ then $\mbox{sc}_L[wz]=(x_0,\ddot
x_1,x_1,\ldots,\ddot x_p,x_p ,\ddot z_1,z_1,\ldots,\ddot z_n,z_n)$
and the equality $\check{\mathsf f}(wz)=\check{\mathsf
f}(\check{\mathsf f}(w)\check{\mathsf f}(z))$ is checked as above.
The case  $m>0$ and $n=0$ is symmetric.
\end{proof}

 For latter reference, we state the
following extension of
Lemma~\ref{lemma:property_hat_function}~\ref{eq:property_check_function},
\begin{equation}\label{eq:property_check_function2}
\forall w,z\in X^+,\ \check{\mathsf f}(wz)=\check{\mathsf
f}(\check{\mathsf f}(w)\check{\mathsf f}(z)),
\end{equation}
whose validity may be verified by the reader.

 We finally set up the
presentations for our local groups. For each $u\in L$ and $\ddot v\in
\ddot{L}$, we define the following relations over $X$
\begin{eqnarray}
r_{u}:\hspace*{8mm}eue&\hspace*{-1.5mm}=\hspace*{-1.5mm}&{\mathsf f}(u)\nonumber\\[-1mm]
r_{\ddot v}:\hspace*{11.5mm}\ddot
v&\hspace*{-1.5mm}=\hspace*{-1.5mm}&\ddot v_{\alpha}{\mathsf
f}(\ddot v)\ddot v_{\omega}\nonumber
\end{eqnarray}
and set $R_{\mathsf f}=\{r_{u},r_{\ddot v}:u\in L,\ddot v\in \ddot{L}\}$. Denote by $\mathcal{T}[G,L,{\mathsf
f}]$ the semigroup $T$ defined by the presentation $\langle X\mid R\rangle$, where $R=R_G\cup R_{\mathsf f}$.
Taking~\eqref{eq:e_neutral element} into account, it can be shown that  the following relations hold in $T$ for any $w\in
A^+$ and $z\in A^+\setminus L$
\begin{equation}\label{extra}
e\check{\mathsf f}(w)e=  \hat{\mathsf f}(w),\quad  e \check{\mathsf f}(z) =\acute{\mathsf f}(z) , \quad
\check{\mathsf f}(z) e = \grave{\mathsf f}(z).
\end{equation}

A more important relation valid in $T$ is revealed in the next
lemma.
\begin{lemma}\label{lemma:w_evaluated_in_T}
If $w\in X^+$ is an arbitrary word, then $w=\check{\mathsf f}(w)$ in
$T$.
\end{lemma}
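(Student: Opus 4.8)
The plan is to prove the identity $w=\check{\mathsf f}(w)$ in $T$ by induction on the structure of $w\in X^+$, reducing everything to the defining relations $R_G\cup R_{\mathsf f}$ together with the auxiliary equalities~\eqref{extra}. First I would observe that, since $\check{\mathsf f}$ is compatible with concatenation in the strong sense of~\eqref{eq:property_check_function2}, namely $\check{\mathsf f}(wz)=\check{\mathsf f}(\check{\mathsf f}(w)\check{\mathsf f}(z))$, it suffices to establish the claim for the ``letters'' of $X$ and for a single concatenation step; more precisely, if $w=w'\cdot x$ with $x\in X$ a single letter and the claim holds for $w'$, then in $T$ we have $w=w'x=\check{\mathsf f}(w')x$, so the task is to show that $\check{\mathsf f}(w')x=\check{\mathsf f}(\check{\mathsf f}(w')x)=\check{\mathsf f}(w'x)$ in $T$. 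This localizes the problem to understanding how appending one letter of $A$ or of $A_G$ to an element of the normal form set $L\cup L^1GL^1$ transforms it, and why that transformation is a consequence of the relations.

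Next I would split into the two cases according to whether the appended letter $x$ lies in $A_G$ or in $A$. If $x\in A_G$, then appending $x$ to a word of the form $\grave{\mathsf f}(u_0)g_1\hat{\mathsf f}(u_1)\cdots g_n\acute{\mathsf f}(u_n)$ only affects the final $G$-part: the suffix $g_n\acute{\mathsf f}(u_n)x$ must be shown equal in $T$ to $g_n'\acute{\mathsf f}(u_n')$ for the appropriate recomputed data, and here the only relations used are those of $R_G$ (rewriting inside $A_G^+$) together with $r_{u}$ in the form $eue={\mathsf f}(u)$ and the neutrality~\eqref{eq:e_neutral element} of $e$. If $x=a\in A$, the interesting case, I would use the recursive description of $\mathrm{sc}_L$: appending $a$ either keeps the last maximal $L$-factor inside $L$ (so $\check{\mathsf f}$ just extends that factor by $a$, and nothing needs rewriting), or it creates a new occurrence of some $\ddot v\in\ddot L$ as a suffix, in which case one applies the relation $r_{\ddot v}:\ \ddot v=\ddot v_\alpha{\mathsf f}(\ddot v)\ddot v_\omega$ exactly once to split off a fresh $G$-factor, and then possibly cascades further letters of $A$ through the already-present $G$-factors using $r_u$-type relations and~\eqref{eq:e_neutral element}. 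The bookkeeping identity that makes the cascade collapse correctly is precisely Lemma~\ref{lemma:property_hat_function}\ref{eq:property_hat_function}, the cocycle-type relation $\hat{\mathsf f}(wz)=\hat{\mathsf f}(w){\mathsf f}(w_m)^{-1}\hat{\mathsf f}(w_mz)$, which is the semantic shadow of what the relations $r_u$ do syntactically.

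The main obstacle I expect is the letter-by-letter case when $x=a\in A$ and the current word already has several $G$-factors: appending $a$ to the final $L^1$-block $\acute{\mathsf f}(u_n)$ may turn it into a word not in $L$, forcing an application of $r_{\ddot v}$, but the newly created $\ddot L$-occurrence can straddle the boundary $u_{n-1}g_nu_n$, so one must argue that the letter $a$ ``migrates leftward'' past the $G$-factor $g_n$ via the relation $eu_ne a e^{-1}\cdots$ — and to do this cleanly one needs the identities~\eqref{extra}, $e\check{\mathsf f}(z)=\acute{\mathsf f}(z)$ and $\check{\mathsf f}(z)e=\grave{\mathsf f}(z)$, to reinsert the idempotent $e$ around the $A^+$-blocks. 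I would therefore first verify~\eqref{extra} carefully (the excerpt only asserts it can be shown), since it is exactly the tool that lets a letter of $A$ cross a $G$-factor and merge with the $L$-block on the other side. Once the single-letter step is proven, the induction closes immediately via~\eqref{eq:property_check_function2}, and the case $w\in A^+$ alone is the base already built into the definition of $\check{\mathsf f}$ together with~\eqref{extra}.
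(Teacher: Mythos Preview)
Your letter-by-letter induction can be made to work, but it is more laborious than necessary, and your ``main obstacle'' rests on a misunderstanding. An element of $\ddot L$ is by definition a word over $A$, so it can never straddle a $G$-block: if $z=ugv\in L^1GL^1$ with $g\in A_G^+$ and you append $a\in A$, any $\ddot L$-factor of $zx=ugva$ lies entirely inside the terminal $A^+$-block $va$, and since $v\in L$ it is in fact a suffix of $va$. Hence the feared cascade across several $G$-factors never arises; there is exactly one application of $r_{\ddot v}$, followed by one use of $r_v$ (sandwiching $v$ between the $e$'s supplied by $g$ and ${\mathsf f}(\ddot v)$ via~\eqref{eq:e_neutral element}). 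The expression ``$e^{-1}$'' is also out of place: $e$ represents $1_G$, so no inverse is needed.

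The paper proceeds differently and more directly. It first treats $w\in A^+$, inducting not on $|w|$ but on the $\ddot L$-length $m$ of $w$. For $m=0$ one has $w\in L$ and $\check{\mathsf f}(w)=w$ literally. For $m\geq 1$, isolate the leftmost $\ddot L$-occurrence $\ddot w_1$ and apply $r_{\ddot w_1}$ once to get $w=w_0\,{\mathsf f}(\ddot w_1)\,z$ in $T$, where $z$ has $\ddot L$-length $m-1$; the induction hypothesis gives $z=\check{\mathsf f}(z)$, and then a single use of~\eqref{eq:e_neutral element} and $r_{w_1}$ collapses the leading $L$-coordinate $w_1$ of $z$ into the group, yielding $\check{\mathsf f}(w)$. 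Once the $A^+$ case is settled, a general $w=u_0g_1u_1\cdots g_nu_n\in X^+$ is handled in one stroke: apply the $A^+$ case to each $u_i$, insert $e$'s around each $g_j$ by~\eqref{eq:e_neutral element}, and read off $\check{\mathsf f}(w)$ from~\eqref{extra}. Neither~\eqref{eq:property_check_function2} nor Lemma~\ref{lemma:property_hat_function}\ref{eq:property_hat_function} is invoked here; those are reserved for the structural results in Proposition~\ref{prop:Z_local_group}.

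Your approach has the virtue of being mechanical once the single-letter step is pinned down, but the paper's two-stage argument (pure $A^+$ by $\ddot L$-length, then block decomposition) is shorter, uses each relation exactly where its shape suggests, and sidesteps the boundary issue you were worried about.
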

\begin{proof} We consider first the case where $w\in A^+$ and let $\mbox{sc}_L[w]=(w_0,\ddot
w_1,w_1,\ldots,\ddot w_m,w_m)$. We prove the result by induction on  the $\ddot L$-length $m$ of $w$. If $m=0$,
then $w=w_0\in L$ and so $\check{\mathsf f}(w)=w$ by definition of $\check{\mathsf f}$. Suppose now that $m\geq
1$ and assume, by induction hypothesis, that the result is valid for words $z\in A^+$ with $\ddot L$-length
$m-1$. Write $w=w'\ddot w_1w''$ where the first occurrence of $\ddot w_1$ is distinguished in the factorization.
Applying relation $r_{\ddot w_1}$ to that occurrence of $\ddot w_1$ one deduces that $T$ verifies $w=w'(\ddot
w_1)_\alpha {\mathsf f}(\ddot w_1)(\ddot w_1)_\omega w''$.  Moreover $w_0=w'(\ddot w_1)_\alpha$ and
$\mbox{sc}_L[z]=(w_1,\ddot w_2,w_2,\ldots,\ddot w_m,w_m)$ where $z=(\ddot w_1)_\omega w''$, whence $T$ satisfies
$w=w_0{\mathsf f}(\ddot w_1)z$.   If $m=1$, then $z=w_1$ and thus $T$ verifies $w=\check{\mathsf f}(w)$. Suppose
next that $m>1$ and notice that, by induction hypothesis, $z=\check{\mathsf f}(z)$ in $T$. So, $T$ verifies
$w=w_0{\mathsf f}(\ddot w_1)\check{\mathsf f}(z)=w_0{\mathsf f}(\ddot w_1)w_1{\mathsf f}(\ddot w_2){\mathsf
f}(w_2){\mathsf f}(\ddot w_3)\cdots  {\mathsf f}(\ddot w_m)w_m$.
 Then, by~\eqref{eq:e_neutral element}, the semigroup $T$ verifies $w=w_0{\mathsf f}(\ddot w_1)ew_1e{\mathsf
f}(\ddot w_2){\mathsf f}(w_2)\cdots  {\mathsf f}(\ddot w_m)w_m$ and also, applying relation $r_{w_1}$,
$w=w_0{\mathsf f}(\ddot w_1){\mathsf f}(w_1){\mathsf f}(\ddot w_2){\mathsf f}(w_2)\cdots {\mathsf f}(\ddot
w_m)w_m=\check{\mathsf f}(w)$, thus concluding the inductive step and the proof of the lemma for $w\in A^+$.

Suppose now that $w=u_0g_1u_1\cdots g_nu_n\in X^+\setminus A^+$,
with $u_0,u_n\in A^*$, $u_1,\ldots,u_{n-1}\in A^+$ and
$g_1,\ldots,g_n\in A_G^+$. So, in $T$,
\begin{alignat*}{4}
w & =\check{\mathsf f}(u_0)g_1\check{\mathsf f}(u_1) g_2
\check{\mathsf f}(u_2)\cdots g_n\check{\mathsf f}(u_n)& \quad &
\mbox{by the first case}\\
& = \check{\mathsf f}(u_0)eg_1e\check{\mathsf f}(u_1)e g_2
e\check{\mathsf f}(u_2)\cdots e g_n e\check{\mathsf f}(u_n)   &&
by~\eqref{eq:e_neutral element}\\
& =  \grave{\mathsf f}(u_0)g_1\hat{\mathsf f}(u_1) g_2 \hat{\mathsf
f}(u_2)\cdots  g_n \acute{\mathsf f}(u_n)  && by~\eqref{extra}\\
&=  \check{\mathsf f}(w).&&
\end{alignat*}
This concludes the proof of the lemma.
\end{proof}

Let $\psi$  be the canonical epimorphism from $X^+$ onto
$X^+/\rho_R=T$ and denote by $Z$ the subset $L\cup L^1GL^1$ of
$X^+$. By Lemma~\ref{lemma:w_evaluated_in_T}, the word
$\check{\mathsf f}(w)\in Z$ is a representative of the element
$\psi(w)\in T$. We show next that $Z$ contains exactly one
representative of each element of $T$ and  endow $Z$ with a
(natural) structure of semigroup that makes it isomorphic to $T$.
Before that we mention that, obviously, $Z= \check{\mathsf f}(X^+)$
and $\check{\mathsf f}(z)=z$ for every $z\in Z$, whence
$\check{\mathsf f}\circ \check{\mathsf f}=\check{\mathsf f}$.
\begin{proposition}\label{prop:Z_local_group}
Let $Z$ be endowed with the operation defined by $z_1\cdot
z_2=\check{\mathsf f}(z_1z_2)$ and let $J$ be the Rees matrix semigroup
$\mathcal{M}[G;L^1,L^1;P]$ where $P=\big(\hat{\mathsf f}(uv)\big)_{u,v\in L^1}$.
\begin{enumerate}
\item\label{item:1} The operation $\cdot$ is associative (and we denote  by $\mathcal{Z}[G,L,{\mathsf
f}]$ the semigroup $Z$).
\item\label{item:2} The mapping $\check{\mathsf f}:X^+\rightarrow Z$ is an epimorphism.
\item\label{item:3} For every $y,w\in X^+$, $\check{\mathsf f}(y)=\check{\mathsf f}(w)$ if and only if $y=w$ in
$T$.
\item\label{item:4} The semigroup $Z$ is isomorphic to $T$.

\item\label{item:5} $Z$ is a local group with minimal ideal $I=L^1GL^1$ isomorphic to $J$.
\end{enumerate}
\end{proposition}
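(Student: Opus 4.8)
The plan is to establish items \ref{item:1}--\ref{item:5} in sequence, letting each feed into the next, with the associativity check \ref{item:1} and the identification of the minimal ideal \ref{item:5} carrying most of the weight.

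For \ref{item:1}, associativity of $\cdot$, I would argue directly from \eqref{eq:property_check_function2}: given $z_1,z_2,z_3\in Z$, I have $(z_1\cdot z_2)\cdot z_3 = \check{\mathsf f}\!\big(\check{\mathsf f}(z_1z_2)\,z_3\big)$, and since $\check{\mathsf f}(z_3)=z_3$ (as $z_3\in Z$), this equals $\check{\mathsf f}\!\big(\check{\mathsf f}(z_1z_2)\,\check{\mathsf f}(z_3)\big)=\check{\mathsf f}(z_1z_2z_3)$ by \eqref{eq:property_check_function2}. Symmetrically $z_1\cdot(z_2\cdot z_3)=\check{\mathsf f}(z_1z_2z_3)$, and the two agree. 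This is where the groundwork in Lemma~\ref{lemma:property_hat_function}\ref{eq:property_check_function} and its extension \eqref{eq:property_check_function2} pays off, so the main obstacle here is merely bookkeeping. For \ref{item:2}, $\check{\mathsf f}:X^+\to Z$ is onto because $Z=\check{\mathsf f}(X^+)$ (already noted), and it is a homomorphism for the same reason: for $y,w\in X^+$, $\check{\mathsf f}(y)\cdot\check{\mathsf f}(w)=\check{\mathsf f}\!\big(\check{\mathsf f}(y)\check{\mathsf f}(w)\big)=\check{\mathsf f}(yw)$ by \eqref{eq:property_check_function2}.

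For \ref{item:3}, one implication is Lemma~\ref{lemma:w_evaluated_in_T}: if $y=w$ in $T$ then $\check{\mathsf f}(y)=y=w=\check{\mathsf f}(w)$ in $T$, and since $\check{\mathsf f}(y),\check{\mathsf f}(w)\in A^+$-type normal forms, I must then argue that two words of $Z$ equal in $T$ are literally equal; the cleanest route is to check that each defining relation in $R=R_G\cup R_{\mathsf f}$ is respected by $\check{\mathsf f}$ (i.e. $\check{\mathsf f}$ applied to the two sides of each $r_u$, $r_{\ddot v}$, and each relation of $R_G$ gives the same word), so $\check{\mathsf f}$ factors through $\rho_R$ and induces a well-defined map $T\to Z$; composing with $\psi$ and using Lemma~\ref{lemma:w_evaluated_in_T} shows this induced map is a two-sided inverse-like correspondence. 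The converse of \ref{item:3} is again Lemma~\ref{lemma:w_evaluated_in_T} (equal $\check{\mathsf f}$-values give $y=\check{\mathsf f}(y)=\check{\mathsf f}(w)=w$ in $T$). Then \ref{item:4} is immediate: the epimorphism $\check{\mathsf f}:X^+\to Z$ has exactly the kernel $\rho_R$ by \ref{item:3}, so $Z\cong X^+/\rho_R=T$.

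The substantive part is \ref{item:5}. First I would show $I=L^1GL^1$ is a subsemigroup and in fact an ideal of $Z$: for $z_1z_2$ with either factor in $I$, I compute $\check{\mathsf f}(z_1z_2)$ and observe, using the definition of $\check{\mathsf f}$ on $X^+\setminus A^+$ together with the relations \eqref{extra} ($e\check{\mathsf f}(w)e=\hat{\mathsf f}(w)$ etc.), that the product again lies in $L^1GL^1$ — the point is that once a word contains an $A_G^+$-block, $\check{\mathsf f}$ collapses its $A^+$-parts into $\grave{\mathsf f},\hat{\mathsf f},\acute{\mathsf f}$ values, landing in $L^1GL^1$. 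Minimality of $I$ follows because any $z\in Z$ can be multiplied on left and right to reach any prescribed element of $I$: given $u g v\in L^1GL^1$ and arbitrary $z$, using $e$ I can sandwich $z$ to produce $e z e$, which by \eqref{extra} is $\hat{\mathsf f}(z)\in G$, and then multiply by suitable $L^1$-words to hit $ugv$; hence $Z^1 z Z^1 \supseteq I$ for every $z$, so $I$ is the unique minimal ideal. Finally, to see $I\cong J=\mathcal M[G;L^1,L^1;P]$ with $P=(\hat{\mathsf f}(uv))_{u,v\in L^1}$, I would define the map sending $(u,g,v)\in J$ to the element of $I$ represented by the word $ugv$ (interpreting $1\in L^1$ as the empty word and $g\in G$ by a chosen $A_G^+$-representative), check it is a bijection onto $I$ using \ref{item:3}, and verify it respects products: $(u_1,g_1,v_1)(u_2,g_2,v_2)=(u_1,\,g_1\,\hat{\mathsf f}(v_1u_2)\,g_2,\,v_2)$ matches $\check{\mathsf f}(u_1g_1v_1u_2g_2v_2)$ because \eqref{extra} rewrites the middle $v_1u_2$-block as $\hat{\mathsf f}(v_1u_2)$. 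Once $I$ is a completely simple minimal ideal containing all idempotents of $Z$ — every idempotent must lie in $I$ since $I\cong$ a Rees matrix semigroup over a group and the only idempotents of $Z$ outside $I$ would have to be in $L$, but elements of $L$ act like honest words and square to longer words unless they are already in $I^{1}$ — Proposition~\ref{prop:caracterization_local_group} gives that $Z$ is a local group. The anticipated main obstacle is the careful verification that $\check{\mathsf f}$ respects all relations of $R$ (needed for \ref{item:3}) and the matching of the Rees product with $\check{\mathsf f}$-multiplication in \ref{item:5}; both are computations that hinge on \eqref{extra} and \eqref{eq:property_check_function2}, so no new idea is required, only disciplined case analysis on whether the words involved lie in $L$, in $A^+\setminus L$, or involve $A_G^+$-blocks.
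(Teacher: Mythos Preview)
Your proposal is correct and follows essentially the same route as the paper's proof: parts (a)--(d) are argued identically via \eqref{eq:property_check_function2}, Lemma~\ref{lemma:w_evaluated_in_T}, and a relation-by-relation verification that $\check{\mathsf f}$ respects each defining relation in $R=R_G\cup R_{\mathsf f}$. For (e) the paper likewise identifies $I=L^1GL^1$ as the minimal ideal isomorphic to $J$ via $ugv\mapsto(u,g,v)$ and then invokes Proposition~\ref{prop:caracterization_local_group}, packaging your ``no idempotents in $L$'' check as the single observation that the Rees quotient $Z/I=L\cup\{0\}$ is nilpotent.
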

\begin{proof} In order to verify~\ref{item:1}, let $z_1,z_2,z_3\in Z$ and notice that $z_1\cdot (z_2\cdot z_3)=\check{\mathsf f}(z_1\check{\mathsf
f}(z_2z_3))$ by definition of $\cdot$. Now, as  $z_1\in Z$, we have
$z_1=\check{\mathsf f}(z_1)$ and so,
by~\eqref{eq:property_check_function2},  $z_1\cdot (z_2\cdot
z_3)=\check{\mathsf f}(z_1z_2z_3)$. By symmetry $(z_1\cdot z_2)\cdot
z_3 =\check{\mathsf f}(z_1z_2z_3)$ which shows the associativity of
$\cdot$.

Let $y,w\in X^+$. Then $\check{\mathsf f}(y)$ and $\check{\mathsf
f}(w)$ belong to $Z$ and, so, by definition of $\cdot$ and
by~\eqref{eq:property_check_function2}, $\check{\mathsf f}(y)\cdot
\check{\mathsf f}(w)=\check{\mathsf f}(\check{\mathsf
f}(y)\check{\mathsf f}(w))=\check{\mathsf f}(yw)$. Since
$\check{\mathsf f}$ is clearly onto,~\ref{item:2} is proved.

For~\ref{item:3}, suppose first that $\check{\mathsf f}(y)=\check{\mathsf f}(w)$. By
Lemma~\ref{lemma:w_evaluated_in_T}, $y=\check{\mathsf f}(y)$ and $w=\check{\mathsf f}(w)$ in $T$, whence also
$y=w$ in $T$. Suppose next that $y=w$ in $T$. Without loss of generality we may assume that $w$ is deduced from
$y$ in one step, so that $y=y'x_1y''$ and $w=y'x_2y''$ with $(x_1=x_2)\in R$. Since by~\ref{item:2}
$\check{\mathsf f}$ is a homomorphism, to deduce $\check{\mathsf f}(y)=\check{\mathsf f}(w)$ it suffices to show
that $\check{\mathsf f}(x_1)=\check{\mathsf f}(x_2)$. If $(x_1=x_2)\in R_G$ then $x_1,x_2\in A_G^+$ and
$x_1=x_2$ in $G$. Therefore, if $x\in G$ represents both $x_1$ and $x_2$, then $\check{\mathsf
f}(x_1)=x=\check{\mathsf f}(x_2)$. Suppose next that $x_1=x_2$ is the relation $r_u\in R_{\mathsf f}$, that is
$eue={\mathsf f}(u)$, for some $u\in L$. We have $\check{\mathsf f}(eue)=e\hat{\mathsf f}(u)e$ by definition of
$\check{\mathsf f}$. As $u\in L$, $\hat{\mathsf f}(u)={\mathsf f}(u)$. Since $e$ represents the identity of $G$,
$e{\mathsf f}(u)e={\mathsf f}(u)$ in $G$ and so $\check{\mathsf f}(eue)={\mathsf f}(u)=\check{\mathsf
f}({\mathsf f}(u))$. It remains to treat the case where $x_1=x_2$ is the relation $r_{\ddot v}$, that is $\ddot
v=\ddot v_{\alpha}{\mathsf f}(\ddot v)\ddot v_{\omega}$, of $R_{\mathsf f}$ for some $\ddot v\in \ddot{L}$. If
$\ddot v\in \ddot{L}$ then $\mbox{sc}_L[\ddot v]=(\ddot v_{\alpha},\ddot v,\ddot v_{\omega})$ and so
$\check{\mathsf f}(\ddot v)=\ddot v_{\alpha}{\mathsf f}(\ddot v)\ddot v_{\omega}$. Now,  $\ddot
v_{\alpha}{\mathsf f}(\ddot v)\ddot v_{\omega}=\check{\mathsf f}(\ddot v_{\alpha}{\mathsf f}(\ddot v)\ddot
v_{\omega})$ because $\ddot v_{\alpha}{\mathsf f}(\ddot v)\ddot v_{\omega}\in Z$, whence $\check{\mathsf
f}(\ddot v)=\check{\mathsf f}(\ddot v_{\alpha}{\mathsf f}(\ddot v)\ddot v_{\omega})$. This completes the proof
of~\ref{item:3}.

To deduce~\ref{item:4} it suffices to  notice that the existence of an isomorphism $\theta:T\rightarrow Z$ is an
immediate consequence of~\ref{item:2} and ~\ref{item:3}. It is the unique mapping from $T$ onto $Z$ such that
$\theta\circ \psi=\check{\mathsf f}$, where $\psi$  is the canonical epimorphism from $X^+$ onto $T$.

It is not difficult to verify that $I=L^1GL^1$ is the minimal ideal of $Z$ and that $\varphi:I\rightarrow J$,
$ugv\mapsto (u,g,v)$, defines an isomorphism from $I$ onto $J$. Moreover, the Rees quotient $Z/I=L\cup\{0\}$ is
a nilpotent semigroup (i.e., $Z/I$ has $0$ as its unique idempotent). Hence, by
Proposition~\ref{prop:caracterization_local_group}, $Z$ is a local group and this finishes the proof of the
proposition.
\end{proof}

Let $S=L\cup J=L\cup (L^1\times G\times L^1)$. We extend the mapping
$\varphi:I\rightarrow J$ above to a new one  $\varphi:Z\rightarrow
S$ by setting $\varphi(w)=w$ for every $w\in L$. Next, we define an
operation $\odot$ in $S$ by setting, for every $w,w'\in L$ and
$(u,g,v),(u',g',v')\in J$,
$$\begin{array}{rlrl}
w\odot w'&\hspace*{-3mm}= \varphi(w\cdot w'),&
w\odot(u,g,v)&\hspace*{-3mm}=\varphi(w\cdot ugv), \\
(u,g,v)\odot w&\hspace*{-3mm}=\varphi(ugv\cdot
w),\hspace*{5mm}&
(u,g,v)\odot(u',g',v')&\hspace*{-3mm}=\varphi(ugv\cdot u'g'v').
\end{array}$$
As one may verify, this operation makes $S$ a semigroup, which we
denote by $\mathcal{S}[G,L,{\mathsf f}]$, and $\varphi$ an
isomorphism from $Z$ onto $S$. The following statement is then an
immediate consequence of previous results.
\begin{corollary}\label{corol:semigroups_S_Z_T} Let  $L\subseteq A^+$ be a non-empty factorial language,
 let $G$ be a group defined by a presentation $\langle A_G\mid
R_G\rangle$  and let ${\mathsf f}:L\cup \ddot{L}\rightarrow G$ be a
function.  The semigroups $\mathcal{S}[G,L,{\mathsf f}]$,
$\mathcal{T}[G,L,{\mathsf f}]$ and $\mathcal{Z}[G,L,{\mathsf f}]$
are isomorphic local groups, defined by the presentation $\langle
A\cup A_G\mid R_G\cup R_{\mathsf f}\rangle$, with minimal ideal
$\mathcal{M}[G;L^1,L^1;P]$, where $P=\big(\hat{\mathsf
f}(uv)\big)_{u,v\in L^1}$, and $L$ as set of non-regular elements.
Therefore, the above semigroups are finite if and only if both $L$
and $G$ are finite.
\end{corollary}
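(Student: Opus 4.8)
Corollary~\ref{corol:semigroups_S_Z_T} is essentially a summary that stitches together everything proved in this section, so the proof is a matter of citing the right pieces in the right order rather than of doing new work. First I would recall that $\mathcal{T}[G,L,{\mathsf f}]$ is the semigroup $T$ defined by the presentation $\langle X\mid R\rangle=\langle A\cup A_G\mid R_G\cup R_{\mathsf f}\rangle$, by definition. Then I would invoke Proposition~\ref{prop:Z_local_group}\ref{item:4}, which gives an isomorphism $\theta:T\rightarrow Z=\mathcal{Z}[G,L,{\mathsf f}]$; composing with the isomorphism $\varphi:Z\rightarrow S=\mathcal{S}[G,L,{\mathsf f}]$ constructed in the paragraph immediately preceding the corollary, we get that all three semigroups are isomorphic, and in particular $\mathcal{S}[G,L,{\mathsf f}]$ and $\mathcal{Z}[G,L,{\mathsf f}]$ are also defined by $\langle X\mid R\rangle$.

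**Local group structure and minimal ideal.** That these are local groups, and that the minimal ideal is (isomorphic to) $\mathcal{M}[G;L^1,L^1;P]$ with $P=\big(\hat{\mathsf f}(uv)\big)_{u,v\in L^1}$, is exactly the content of Proposition~\ref{prop:Z_local_group}\ref{item:5}, transported along the isomorphisms. For the claim that $L$ is the set of non-regular elements: under $\varphi$ the elements of $Z$ split as $L\cup I$ with $I=L^1GL^1$ the minimal ideal; the elements of the minimal ideal of any semigroup are regular (the minimal ideal is completely simple here, so every element lies in a subgroup), while for $w\in L$ one checks that $w$ is not regular — no element of $\mathcal{Z}[G,L,{\mathsf f}]$ can serve as a pseudo-inverse, since the product of any two elements involving $w$ either stays in $L$ of strictly larger "$\ddot L$-length" contribution or drops into $I$, so $wxw=w$ with $w\in L$ is impossible. (This is where one uses that $Z/I=L\cup\{0\}$ is nilpotent, already observed in the proof of Proposition~\ref{prop:Z_local_group}.)

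**Finiteness.** Finally, for the last sentence: $S=L\cup(L^1\times G\times L^1)$ as a set, so $|S|=|L|+(|L|+1)^2\cdot|G|$ when everything is finite, and conversely if $S$ is finite then both summands are, forcing $L$ and $G$ finite (note $L\neq\emptyset$ by hypothesis, and $G$ embeds into $S$ via $g\mapsto(1,g,1)$ when $L^1$ is nonempty, which it always is). I expect no real obstacle here; the only mildly delicate point is the assertion that $L$ is precisely the non-regular part, which requires the nilpotency of the Rees quotient rather than just the abstract isomorphism, but this has already been recorded. I would therefore keep the proof to one or two sentences, citing Proposition~\ref{prop:Z_local_group} and the construction of $\varphi:Z\rightarrow S$, and leave the routine cardinality count to the reader.
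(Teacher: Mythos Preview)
Your proposal is correct and matches the paper's approach exactly: the paper gives no proof at all for this corollary, stating only that it ``is then an immediate consequence of previous results,'' and you have simply spelled out which results those are (Proposition~\ref{prop:Z_local_group} parts~\ref{item:4} and~\ref{item:5}, the construction of $\varphi:Z\to S$, and the nilpotency of $Z/I$). Your treatment of the non-regular part and the finiteness count are precisely the routine verifications the paper is leaving to the reader.
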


%%%%%%%%%%%%%%%%%%%%%%%%%%%%%%%%%%%%%%%%%%%%%%%%%%%%%%%%%%%%%%%%%%%%%%%%%%%%%%%%%%%%%%%%%%%%%%%%%%%%%%%%%
\section{Generators for  $\LH$}\label{section:cis}
In this section, we let $\He$ denote a pseudovariety of groups and show that the semigroups
$\mathcal{S}[G,L,{\mathsf f}]$, with $G\in \He$ and $L$ finite, form a generating set of the pseudovariety
$\LH$. For this we will use the results of~\cite{Almeida&Azevedo:1993}, where Almeida and Azevedo study
semidirect products of the form $\V*\D_k$, with $\V$ a pseudovariety of semigroups which is not locally trivial
and $\D_k$
 the pseudovariety of all finite semigroups that verify the
identity $yx_1\cdots x_k=x_1\cdots x_k$. Recall that $\bigcup_{k\geq 1}{\bf D}_k=\D$. We are interested only in
the cases where $\V$ is $\He$, since $\LH={\bf H*D}=\bigcup_{k\geq 1}{\bf H\ast D}_k$, and so we only recall the
corresponding results.

 Denote by $L_k=A^{\leq k}$ the
(factorial) language of all words over $A$ of length at most $k$ and notice that $\ddot L_k=A^{k+1}$. For a word
$u\in A^+$, let ${\mathtt i}_k(u)$ (resp.\ ${\mathtt t}_k(u)$) be the longest prefix (resp.\ suffix) of $u$ of
length at most $k$. A function ${\hbar}:A^+\rightarrow G$ into a group $G$ is said to be a $k$-superposition
homomorphism if $\hbar(uv)=\hbar(u){\hbar}({\mathtt t}_k(u)v)={\hbar}(u {\mathtt
i}_k(v)){\hbar}(v)$ and ${\hbar}(w)=1_G$ for every $u,v\in A^+$ and $w\in L_k$. Notice that, for a word
$w\in A^+\setminus L_k$, ${\hbar}(w)={\hbar}(w_1){\hbar}(w_2)\cdots {\hbar}(w_{|w|-k})$ where
$w_1,w_2,\ldots,w_{|w|-k}\in\ddot L_k$ are the successive factors of length $k+1$ of $w$. Therefore a
$k$-superposition homomorphism ${\hbar}:A^+\rightarrow G$ is defined by  ${\hbar}(\ddot L_k)$.

Given a $k$-superposition homomorphism ${\hbar}:A^+\rightarrow
G$ into a group $G\in\He$, let
$$M_k(G,{\hbar})=\{(v,1_G,v)\mid v\in L_{k-1}\}\cup (A^k\times G\times A^k)$$ be the
semigroup with multiplication given, for every $u,v,u',v'\in L_k$ and $g,g'\in G$, by
$$(u,g,v)(u',g',v')=({\mathtt i}_k(uu'),g{\hbar}(vu')g',{\mathtt t}_k(vv')).$$ Note that $I=A^k\times G\times
A^k$ is the minimal ideal of $M_k(G,{\hbar})$, precisely the
Rees matrix semigroup $\mathcal{M}[G;A^k,A^k;P]$ with
$P=\big({\hbar}(uv)\big)_{u,v\in A^k}$, and that all the
elements of $\{(v,1_G,v)\mid v\in L_{k-1}\}$ are non-regular, whence
$M_k(G,{\hbar})\in\LH$. Furthermore, we have the following
specialization of~\cite[Corollary 10.6.8]{Almeida:1992}.
\begin{proposition}\label{prop:generators_Mk} The pseudovariety $\LH$
is generated by the semigroups of the form $M_k(G,{\hbar})$ with
$k\geq 1$,  $G\in\He$, $A$ an alphabet and ${\hbar}:A^+\rightarrow G$ a $k$-superposition homomorphism.
\end{proposition}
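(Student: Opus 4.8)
The plan is to deduce this statement directly from the general result of Almeida and Azevedo, namely \cite[Corollary 10.6.8]{Almeida:1992}, which asserts that a semidirect product $\V*\D_k$ with $\V$ not locally trivial is generated by the test-semigroups $M_k(S,\hbar)$ with $S\in\V$, $A$ an alphabet, and $\hbar:A^+\to S$ a $k$-superposition homomorphism. First I would specialise $\V$ to the pseudovariety $\He$ of groups, which is certainly not locally trivial (it contains non-trivial groups, so it is not contained in $\mathbf{N}$, equivalently in $\LI$). Applying the cited result to each factor $\He*\D_k$, one obtains that $\He*\D_k$ is generated by the semigroups $M_k(G,\hbar)$ with $G\in\He$. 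Taking the union over $k\geq 1$ and using the identity $\LH=\He*\D=\bigcup_{k\geq1}\He*\D_k$ recalled at the start of this section, one concludes that $\LH$ is generated by $\bigcup_{k\geq1}\{M_k(G,\hbar):G\in\He,\ \hbar\text{ a }k\text{-superposition homomorphism}\}$.

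The one point that requires a small argument is matching the definition of $M_k(G,\hbar)$ used here with the one appearing in \cite{Almeida:1992}. I would check that for a group $G\in\He$ the semigroup $M_k(G,\hbar)$ as defined above — with underlying set $\{(v,1_G,v):v\in L_{k-1}\}\cup(A^k\times G\times A^k)$ and product $(u,g,v)(u',g',v')=(\mathtt{i}_k(uu'),g\,\hbar(vu')\,g',\mathtt{t}_k(vv'))$ — coincides with (or is isomorphic to) the test-semigroup occurring in Almeida and Azevedo's construction for $\V*\D_k$, once $\V$ is taken to be $\He$. This amounts to verifying that the defining formula for the product is exactly the one induced by the wreath/semidirect product structure when the semigroup of the ``$\V$-part'' is a group, where the $k$-superposition condition on $\hbar$ encodes precisely the required compatibility; the restriction to triples of the form $(v,1_G,v)$ with $|v|<k$ accounts for the non-regular elements. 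I would also observe that each such $M_k(G,\hbar)$ does lie in $\LH$, as already noted in the discussion preceding the proposition: its minimal ideal $A^k\times G\times A^k$ is the Rees matrix semigroup $\mathcal{M}[G;A^k,A^k;P]$ with $P=(\hbar(uv))_{u,v\in A^k}$, a completely simple semigroup over $G\in\He$ containing all idempotents, so by Proposition~\ref{prop:caracterization_local_group} each local submonoid is an $\mathcal{H}$-class isomorphic to $G$, hence in $\He$.

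The main obstacle, such as it is, is purely bookkeeping: one must confirm that the specific indexing conventions (which letters play the role of the $\D_k$-coordinates, whether prefixes or suffixes are truncated on which side, how the ``short'' diagonal elements $(v,1_G,v)$ with $|v|<k$ are incorporated) agree with those of the source, so that the quoted corollary applies verbatim. Since the statement to be proved is explicitly labelled a ``specialization of \cite[Corollary 10.6.8]{Almeida:1992}'', no genuinely new idea is needed; the proof consists of this identification of the semigroups followed by the two-line deduction $\LH=\bigcup_k\He*\D_k$ and the application of the cited generation result to each $\He*\D_k$.
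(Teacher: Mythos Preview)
Your proposal is correct and matches the paper's approach exactly: the paper gives no proof at all for this proposition, introducing it simply as ``the following specialization of~\cite[Corollary 10.6.8]{Almeida:1992}'', and your write-up spells out precisely how that specialization goes (take $\V=\He$, use $\LH=\He*\D=\bigcup_{k\geq1}\He*\D_k$, and check the definitions of $M_k(G,\hbar)$ agree). The only minor point to watch is your justification that $\He$ is not locally trivial: this uses that $\He$ contains a non-trivial group, so strictly speaking the argument as written covers non-trivial $\He$; but this is the same tacit assumption the paper makes when invoking the cited corollary.
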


Given a  semigroup  $M_k(G,{\hbar})$, we now identify a finite semigroup $\mathcal{S}[G,L,{\mathsf f}]$,
 also denoted  $S_k(G,{\mathsf f})$, such that $M_k(G,{\hbar})$ is a subsemigroup of a homomorphic image of $S_k(G,{\mathsf f})$. The group
$G$ is of course the same for the two semigroups. We take for $L$
the language $L_k=A^{\leq k}$. This way the minimal ideal
$I=A^k\times G\times A^k$ of $M_k(G,{\hbar})$ is a subset of the
minimal ideal $J=L_k^1\times G\times L_k^1$ of $S_k(G,{\mathsf f})$.
Finally, notice that ${\hbar}$ has domain $A^+$ whereas we need
to define a function ${\mathsf f}$ with domain $L_k\cup \ddot
L_k(=L_{k+1})$ and we want to make $I$ a subsemigroup of $J$. But
this is no problem since ${\hbar}$ is a $k$-superposition
homomorphism and so ${\hbar} (L_k)=\{1_G\}$ and the  value of
${\hbar}$ on $A^+\setminus L_k$ is determined by its value on
$\ddot L_k$. We let ${\mathsf f}$ be the restriction of ${\hbar}$ to $L_k\cup \ddot L_k$. Therefore, with the above choices,
${\hbar}$ is precisely the restriction of $\hat{\mathsf f}$ to
$A^+$. Since the function $\hat{\mathsf f}$ is the one that
determines the structure matrix of the minimal ideal $J$ of
$S_k(G,{\mathsf f})$, the objective of turning $I$ into a
subsemigroup of $J$ is guaranteed. The difference between
$M_k(G,{\hbar})$ and $S_k(G,{\mathsf f})$ is not only in their
minimal ideals: while the set of non-regular elements of
$S_k(G,{\mathsf f})$ is $L_k$, the set of non-regular elements of
$M_k(G,{\hbar})$ is in bijection with $L_{k-1}$. That is, each
word $u\in A^k$ is an element of $S_k(G,{\mathsf f})$ that lies
${\cal J}$-above the minimal ideal while in $M_k(G,{\hbar})$ it
corresponds to the element $(u,1_G,u)$ that lies in the minimal
ideal. On the other hand $S_k(G,{\mathsf f})$ also has an element
$(u,1_G,u)$. So, we construct a new semigroup
 in which the elements $u$ and $(u,1_G,u)$ are identified (and only these ones).

\begin{lemma}\label{lemma:S'k}
With the above choices for $L$ and ${\mathsf f}$, let
$S'_k(G,{\mathsf f})$ be a semigroup defined by the presentation
$\langle X\mid R'\rangle$, where $R'=R_G\cup R_{\mathsf
f}\cup\{u=ueu:u\in A^k \}$.
\begin{enumerate}
\item\label{item:1} The semigroup $S'_k(G,{\mathsf f})$
is a homomorphic image of $S_k(G,{\mathsf f})$.
\item\label{item:2} $\rho_{R'}=\rho_R\cup\{u=ueu:u\in A^k \}$, where $R=R_G\cup R_{\mathsf
f}$.
\end{enumerate}
\end{lemma}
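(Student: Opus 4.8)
The plan is to establish both items essentially simultaneously, since they are two sides of the same coin: item~\ref{item:2} is the precise description of the congruence that item~\ref{item:1} asserts exists. I would begin by recording the obvious inclusion $\rho_R\subseteq\rho_{R'}$, which holds because $R\subseteq R'$, and the inclusion $\{(u,ueu):u\in A^k\}\subseteq\rho_{R'}$, which is immediate from the definition of $R'$. Since $\rho_{R'}$ is a congruence containing both $\rho_R$ and the pairs $(u,ueu)$, it contains the congruence $\rho$ generated by $\rho_R\cup\{(u,ueu):u\in A^k\}$. For item~\ref{item:1} this already gives a surjective homomorphism $S_k(G,{\mathsf f})=X^+/\rho_R\twoheadrightarrow X^+/\rho_{R'}=S'_k(G,{\mathsf f})$, namely the canonical map induced by the inclusion $\rho_R\subseteq\rho_{R'}$; so item~\ref{item:1} follows as soon as item~\ref{item:2} — or even just the inclusion $\rho_R\subseteq\rho_{R'}$ — is in hand, but it is cleanest to state it as a consequence of~\ref{item:2}.

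The real content is the reverse inclusion $\rho_{R'}\subseteq\rho$, equivalently that $\rho$ is already a congruence and contains $R'$; the only thing to check is that $\rho$ is saturated under the extra relations $r_u$ and $r_{\ddot v}$ and $R_G$, but those are built into $\rho_R\subseteq\rho$, so in fact it suffices to show $\rho$ is a congruence — which it is by construction — and that $\rho_{R'}$, being the \emph{smallest} congruence containing $R'=R\cup\{(u,ueu):u\in A^k\}$, is contained in any congruence containing $R'$; since $\rho$ is such a congruence, $\rho_{R'}\subseteq\rho$. Thus $\rho_{R'}=\rho$, and it remains only to verify that $\rho=\rho_R\cup\{(u,ueu):u\in A^k\}$ \emph{as a set}, i.e.\ that adjoining the new pairs to $\rho_R$ already yields a congruence without any further closure being forced. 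Here I would use Lemma~\ref{lemma:w_evaluated_in_T} and Proposition~\ref{prop:Z_local_group}: working inside the concrete model $\mathcal{Z}[G,L,{\mathsf f}]$ of $T=X^+/\rho_R$, the element represented by $u\in A^k$ is the word $u$ itself (as $u\in L$, $\check{\mathsf f}(u)=u$), while $ueu$ represents $\check{\mathsf f}(ueu)=u\hat{\mathsf f}(1)u=(u,1_G,u)\in I=L^1GL^1$. So identifying $u$ with $ueu$ in $T$ amounts to collapsing, for each $u\in A^k$, the non-regular element $u$ with the idempotent-$\mathcal H$-class representative $(u,1_G,u)$ of the minimal ideal, and collapsing nothing else. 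The key combinatorial point is that these identifications are \emph{compatible}: if $u,u'\in A^k$ and $u\ne u'$ then $u$ and $u'$ are not $\rho_R$-related, and $(u,1_G,u)$, $(u',1_G,u')$ are distinct elements of $J$, so no two of the new classes merge; moreover one must check that the relation on $Z$ that glues each pair $\{u,(u,1_G,u)\}$ ($u\in A^k$) and is the identity elsewhere is a \emph{congruence} for the operation $\cdot$ of $\mathcal{Z}[G,L,{\mathsf f}]$.

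That congruence check is the step I expect to be the main obstacle, and it is a finite but slightly delicate computation: one must verify that for $z\in Z$ and $u\in A^k$ one has $\check{\mathsf f}(zu)=\check{\mathsf f}(z\,ueu)$ and $\check{\mathsf f}(uz)=\check{\mathsf f}(ueu\,z)$ \emph{up to the gluing}, i.e.\ either both sides are equal in $Z$, or they are the two members $w,(w,1_G,w)$ of a glued pair. The point is that multiplying by $u$ on the right from within $L_k$ can produce either a longer word still in $L_k$ (when $|vu_{\text{relevant prefix}}|\le k$, using the structure of $L_k=A^{\le k}$) or an element of the minimal ideal, and in the boundary case the word-level product lands on some $w\in A^k$ while the product with $ueu$ lands on $(w,1_G,w)$ — exactly the glued pair, so compatibility holds; in all other cases the two products already coincide because $e$ represents $1_G$ and $\hat{\mathsf f}(1)=1_G$. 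Once this is confirmed, the quotient of $\mathcal Z$ by this congruence is a well-defined semigroup whose defining congruence on $X^+$ is precisely $\rho_R\cup\{(u,ueu):u\in A^k\}$, giving~\ref{item:2}; and the quotient map $\mathcal Z\to\mathcal Z/{\sim}$, transported through the isomorphisms of Corollary~\ref{corol:semigroups_S_Z_T}, is the asserted homomorphism $S_k(G,{\mathsf f})\twoheadrightarrow S'_k(G,{\mathsf f})$ of~\ref{item:1}.
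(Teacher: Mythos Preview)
Your strategy --- pass to the concrete model $\mathcal{Z}[G,L_k,{\mathsf f}]$ and verify that identifying each $u\in A^k$ with $(u,1_G,u)$ defines a congruence --- is sound and amounts to the same content as the paper's argument, but framed semantically rather than syntactically. The paper stays at the word level: it shows directly that any single application of a new relation $u=ueu$ in a \emph{non-trivial} context, $w=w'uw''\to z=w'ueuw''$ with $w'$ or $w''$ nonempty, already lies in $\rho_R$. If the neighbouring letter of $w'$ (or $w''$) is in $A_G$, one slips in an $e$ via~\eqref{eq:e_neutral element} and applies $r_u$ together with ${\mathsf f}(u)=1_G$; if that letter is $a\in A$, then $au\in A^{k+1}=\ddot L_k$, so one first applies $r_{au}$ to manufacture an $A_G$-letter adjacent to $u$ and reduces to the previous case. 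Transported to $\mathcal{Z}$, this is exactly the statement $z\cdot u=z\cdot(u,1_G,u)$ for every nonempty $z$, which is your congruence check.

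Your anticipated ``boundary case'' is a red herring, though. You worry that $z\cdot u$ might land on some $w\in A^k\subseteq L_k$ while $z\cdot(ueu)$ lands on $(w,1_G,w)$, so that the two products are merely glued rather than equal. This cannot happen: since $|u|=k$, the word $zu$ has length at least $k+1$ for any nonempty $z$, so $\check{\mathsf f}(zu)$ is already in the minimal ideal $L_k^1GL_k^1$. One gets $\check{\mathsf f}(zu)=\check{\mathsf f}(zueu)$ on the nose (and symmetrically on the left), so the compatibility check yields literal equality throughout --- precisely what the paper's syntactic computation establishes. The verification is therefore simpler than you suggest.

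One further caution: the literal set equality $\rho_{R'}=\rho_R\cup\{(u,ueu):u\in A^k\}$ that you stress does not hold as written --- e.g.\ $(u,ueeu)\in\rho_{R'}\setminus\rho_R$ is not of the form $(v,vev)$. Both you and the paper are reading~\ref{item:2} informally, as the assertion that the quotient $S_k(G,{\mathsf f})\to S'_k(G,{\mathsf f})$ merges exactly the pairs $\{u,(u,1_G,u)\}$ and nothing else; that is what your congruence check on $\mathcal{Z}$ actually proves, and what is used afterwards.
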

\begin{proof}
To deduce~\ref{item:1} it suffices to note that $S'_k(G,{\mathsf
f})$ is defined by a presentation that differs from the one defining
$S_k(G,{\mathsf f})$ only in having the extra relations $u=ueu$
($u\in A^k$).

The inclusion $\rho_R\cup\{u=ueu:u\in A^k \}\subseteq \rho_{R'}$ is immediate from the definition of a
congruence generated by a relation. To prove the reverse inclusion, let $w,z\in X^+$ and suppose that $(w=z)\in
\rho_{R'}$ and $(w=z)\not\in \{u=ueu:u\in A^k \}$. We need to prove that $(w=z)\in \rho_{R}$. This holds
trivially if the relation $w=z$ is deduced  without using the relations $u=ueu$ ($u\in A^k$). We assume, without
loss of generality, that $w=w'uw''$ and $z=w'ueuw''$ with $u\in A^k$ and $w'$ or $w''$ non-empty. We treat only
the case in which $w'$ is non-empty since the case $w''$ non-empty is similar. If ${\mathtt t}_1(w')\in A_G$
then, by~\eqref{eq:e_neutral element} and using relation $r_u$, one deduces that $z=w'eueuw''=w'euw''=w$ in
$X^+/\rho_R$. If ${\mathtt t}_1(w')=a\in A$ then $w'=w'''a$ for some $w'''\in X^*$ and $au\in A^{k+1}=\ddot
L_k$. Notice that $(au)_\alpha={\mathtt i}_k(au)$ and $(au)_\omega={\mathtt t}_k(au)=u$. Hence, using relation
$r_{au}$ one deduces that $z=w'''{\mathtt i}_k(au){\mathsf f}(au)ueuw''$ is in $\rho_R$. Since ${\mathsf
f}(au)\in A_G^+$ we may now proceed as in the previous case. So, using relations $r_{u}$ and $r_{au}$, we deduce
that $z=w'''{\mathtt i}_k(au){\mathsf f}(au)uw''=w'''auw''=w$ in $X^+/\rho_R$. Therefore, in both cases,
$(w=z)\in\rho_R$. This proves~\ref{item:2}.
\end{proof}

Note that, informally speaking, Lemma~\ref{lemma:S'k}~\ref{item:2}
states that the relation $u=ueu$ ($u\in A^k$) makes no
identifications on $S_k(G,{\mathsf f})$ other than the elements $u$
and $(u,1_G,u)$. It should now be clear that the mapping
$\phi:M_k(G,{\hbar})\rightarrow S'_k(G,{\mathsf f})$ given, for
$v\in L_{k-1}$, $u_1,u_2\in A^k$ and $g\in G$, by $\phi(v,1,v)=v$
and $\phi (u_1,g,u_2)=(u_1,g,u_2)$ is a monomorphism from
$M_k(G,{\hbar})$ into $ S'_k(G,{\mathsf f})$. The following
result is then an immediate consequence of
Proposition~\ref{prop:generators_Mk} and Lemma~\ref{lemma:S'k}.

\begin{corollary}\label{corol:generators_Sk} The pseudovariety $\LH$
is generated by the semigroups of the form $S_k(G,{\mathsf f})$ with
$k\geq 1$,  $G\in\He$, $A$ an alphabet and ${\mathsf
f}:L_{k+1}\rightarrow G$ with ${\mathsf f}(L_{k})=\{1_G\}$.
\end{corollary}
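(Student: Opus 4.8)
The plan is to derive Corollary~\ref{corol:generators_Sk} directly from Proposition~\ref{prop:generators_Mk} by way of Lemma~\ref{lemma:S'k}. Since $\LH$ is already generated by the semigroups $M_k(G,{\hbar})$, it suffices to show that each such $M_k(G,{\hbar})$ lies in the pseudovariety generated by the semigroups $S_k(G,{\mathsf f})$ of the stated form; conversely, since each $S_k(G,{\mathsf f})$ is itself a finite local group with maximal subgroup $G\in\He$ (by Corollary~\ref{corol:semigroups_S_Z_T}), it belongs to $\LH$, so the generated pseudovariety is exactly $\LH$.

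First I would fix the dictionary between the data: given a $k$-superposition homomorphism ${\hbar}:A^+\rightarrow G$, take $L=L_k=A^{\leq k}$ (so $\ddot L=\ddot L_k=A^{k+1}$, $L_k\cup\ddot L_k=L_{k+1}$) and let ${\mathsf f}$ be the restriction of ${\hbar}$ to $L_{k+1}$, which satisfies ${\mathsf f}(L_k)=\{1_G\}$ because ${\hbar}(L_k)=\{1_G\}$. As observed in the text preceding the lemma, with these choices ${\hbar}$ is precisely the restriction of $\hat{\mathsf f}$ to $A^+$, so the structure matrix $P=\big(\hat{\mathsf f}(uv)\big)_{u,v\in A^k}$ of the minimal ideal $A^k\times G\times A^k$ of $M_k(G,{\hbar})$ agrees with the corresponding block of the structure matrix of $S_k(G,{\mathsf f})$, and the multiplication formulas match.

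Next I would invoke Lemma~\ref{lemma:S'k}: part~\ref{item:1} gives a surjective homomorphism $S_k(G,{\mathsf f})\twoheadrightarrow S'_k(G,{\mathsf f})$, and part~\ref{item:2} guarantees that the only identification induced by the extra relations $u=ueu$ ($u\in A^k$) is between the word $u$ and the triple $(u,1_G,u)$. Using this, I would verify that the map $\phi:M_k(G,{\hbar})\rightarrow S'_k(G,{\mathsf f})$ defined by $\phi(v,1_G,v)=v$ for $v\in L_{k-1}$ and $\phi(u_1,g,u_2)=(u_1,g,u_2)$ for $u_1,u_2\in A^k$, $g\in G$, is a well-defined injective homomorphism: injectivity follows because distinct elements of $M_k(G,{\hbar})$ land in distinct $\rho_{R'}$-classes (the $L_{k-1}$ part injects into the non-regular part $L_k$ via $v\mapsto v$, the minimal-ideal part injects into $A^k\times G\times A^k\subseteq J$, and these two parts stay separate), and homomorphy is the routine check that the two multiplication rules correspond under $\phi$, using that ${\hbar}=\hat{\mathsf f}|_{A^+}$ controls both products. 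Hence $M_k(G,{\hbar})$ embeds in a homomorphic image of $S_k(G,{\mathsf f})$, so it lies in the pseudovariety generated by the $S_k(G,{\mathsf f})$'s, and combined with Proposition~\ref{prop:generators_Mk} this yields the claim.

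The main obstacle is the bookkeeping in the injectivity argument for $\phi$: one must be careful that when an element $(u,1_G,u)$ with $u\in A^k$ is a member of $M_k(G,{\hbar})$'s minimal ideal, its image $(u,1_G,u)$ in $S'_k(G,{\mathsf f})$ coincides with the image of the word $u\in L_k$ — but this is exactly the single identification permitted by Lemma~\ref{lemma:S'k}~\ref{item:2}, and no such $u$ lies in the domain $L_{k-1}$ of the non-regular part of $M_k(G,{\hbar})$, so no collision between the two pieces of $M_k(G,{\hbar})$ occurs. Everything else is formula-matching already laid out in the discussion preceding the statement, so the corollary follows "immediately" as asserted.
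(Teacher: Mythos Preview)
Your proposal is correct and follows essentially the same route as the paper: it combines Proposition~\ref{prop:generators_Mk} with Lemma~\ref{lemma:S'k} and the monomorphism $\phi:M_k(G,{\hbar})\hookrightarrow S'_k(G,{\mathsf f})$ described just before the corollary. You have simply spelled out the ``immediate consequence'' in more detail, including the (necessary) observation that each $S_k(G,{\mathsf f})$ lies in $\LH$ by Corollary~\ref{corol:semigroups_S_Z_T}.
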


We observe that, in general, the semigroup $M_k(G,{\hbar})$ is
not a homomorphic image of $S_k(G,{\mathsf f})$. That is, it is not
possible to obtain a presentation for $M_k(G,{\hbar})$ simply by
adding new relations to the presentation that defines
$S_k(G,{\mathsf f})$. For instance, consider the alphabet $A=\{a\}$,
the cyclic group $G=\langle g\mid g^3=g\rangle=\{g,e\}$ of order 2,
$k=1$ and ${\hbar}(a^2)=g$. Then $M_k(G,{\hbar})=\{a\}\times
G\times \{a\}\cong G$ and $S_k(G,{\mathsf f})=\langle a,g\mid
g^3=g,eae=e,a^2=aga\rangle=\{a\}\cup (\{1,a\}\times G\times
\{1,a\})$. In order to obtain $M_k(G,{\hbar})$ as a homomorphic
image of $S_k(G,{\mathsf f})$ one should identify the idempotents
$(1,e,1)$ and $(a,g,a)$ of $S_k(G,{\mathsf f})$, by adding the
relation $e=aga$ to the above presentation. But then we would have
$a^2=e$ and so $e=ee=a^2a^2=aa^2a=aea=a^2ga^2=ege=g$ in the
resulting semigroup. This semigroup would therefore be aperiodic and
so different from $M_k(G,{\hbar})$.

%%%%%%%%%%%%%%%%%%%%%%%%%%%%%%%%%%%%%%%%%%%%%%%%%%%%%%%%%%%%%%%%%%%%%%%%%%%%%%%%%%%%%%%%%%%%%%%%%%%%%%%%%
\section{Canonical forms for $\kappa$-terms of rank 1}\label{section:kappa-terms_rank1}
A $\kappa$-term is a formal expression obtained from letters of an
alphabet $A$ using two operations: the binary concatenation and the
unary $(\omega-1)$-power. The {\em rank} of a $\kappa$-term is the maximum number of nested $(\omega-1)$-powers in it. A $\kappa$-term has a natural
interpretation on each finite semigroup $S$: the concatenation is
viewed as the semigroup multiplication while the $(\omega-1)$-power
is interpreted as the unary operation which sends each element $s$
of S to the inverse of $s^{\omega+1}(=ss^\omega)$ in the maximal
subgroup containing the unique idempotent power $s^\omega$ of $s$.
For a class $\mathcal{C}$ of finite semigroups and $\kappa$-terms
$\pi_1$ and $\pi_2$, we say that $\mathcal{C}$ satisfies the
$\kappa$-identity $\pi_1=\pi_2$, and write
$\mathcal{C}\models\pi_1=\pi_2$, if $\pi_1$ and $\pi_2$ have the
same interpretation over every semigroup of~$\mathcal{C}$. The
$\kappa$-word problem for $\mathcal{C}$ consists in deciding, given
a $\kappa$-identity $\pi_1=\pi_2$, whether
$\mathcal{C}\models\pi_1=\pi_2$. A solution for this problem has
been obtained for some important pseudovarieties and we will soon
present solutions for the pseudovarieties $\Se$~\cite{Costa:2013}
and $\LG$~\cite{Costa&Nogueira&Teixeira:2013}. In the current paper
we will treat an instance of the problem by showing that $\Se$ and
$\LG$ satisfy the same $\kappa$-identities $\pi=\rho$ where $\pi$
and $\rho$ have rank at most 1, and that it is decidable whether
 $\Se$ and $\LG$ satisfy $\pi=\rho$. For that, we first reduce each $\kappa$-term of rank 1 to a certain
canonical form, similar to the normal form introduced by McCammond in~\cite{mccammond2001} to solve the
$\omega$-word problem for the pseudovariety $\A$ of finite aperiodic semigroups. The definition of an
$\omega$-term differs from that of a $\kappa$-term only in the use of the $\omega$-power instead of the
$(\omega-1)$-power (we remark that the two operations coincide on the pseudovariety $\A$). Then we use our
test-semigroups ${\cal S}(G,L,{\mathsf f})$ to separate two distinct canonical forms.

A word is said to be {\em primitive} if it cannot be written in the form $u^n$ with $n>1$. We say that two words
$w$ and $z$ are {\em conjugate} if there exist words $u,v\in A^*$ such that $w=uv$ and $z=vu$.  Let an order be
fixed for the letters of the alphabet $A$. A {\em Lyndon word} is a primitive word which is minimal, with
respect to the lexicographic ordering, in its conjugacy class. For combinatorial properties involving Lyndon
words that are relevant for the remaining of this paper, the reader is referred
to~\cite{Almeida&Costa&Zeitoun:2012}, where an alternative proof of correctness of McCammond's normal form
algorithm over $\A$ is presented.

We employ the following notation for $\kappa$-terms, where $n>0$: $x^{\omega+n}$  represents $x^{\omega}x^n$;
$x^{\omega+0}$ is $x^{\omega}$; $x^{\omega-n}$ denotes $(x^{\omega-1})^{n}$. The $\kappa$-terms of rank $0$ are
the words from $A^+$ and they are all considered to be in canonical form. A $\kappa$-term of rank $1$ is an
expression $\pi$ of the form
\begin{equation}\label{eq:normal:form}
\pi=u_0x_1^{\omega+q_1}u_1x_2^{\omega+q_2} \cdots x_m^{\omega+q_m} u_m
\end{equation}
with $m\geq 1$, $u_0,\ldots,u_m\in A^*$, $x_1,\ldots,x_m\in A^+$ and $q_1,\ldots,q_m\in \Z$. Using the
terminology of McCammond~\cite{mccammond2001}, each factor of the form $x_i^{\omega+q_i}
u_ix_{i+1}^{\omega+q_{i+1}}$ will be called a \emph{crucial portion of $\pi$}. The prefix $u_0x_1^{\omega+q_1}$
and the suffix $x_m^{\omega+q_m} u_m$ will be called respectively the \emph{initial portion} and the \emph{final
portion} of $\pi$.

\begin{definition}[Canonical form in rank 1]\label{def:normalform1}
 The \kt\ $\pi$ in~\eqref{eq:normal:form} is in {\em canonical form}
if
\begin{enumerate}
  \item\label{item:1:normalform1}  each $x_i$ is a Lyndon word;

  \item\label{item:2:normalform1} $x_i$ is not a suffix of $u_{i-1}$ for $1\leq i\leq m$;

  \item\label{item:3:normalform1}  $x_i$ is not a prefix of $u_{i}x_{i+1}^{|x_i|}$ for $1\leq i\leq m$, where $x_{m+1}$ is the empty word.
\end{enumerate}
\end{definition}

For instance, let $a,b\in A$ be letters such that $a<b$. The $\kappa$-terms $b(abb)^{\omega-1} b^{\omega+2}$ and
$a^{\omega+3} b(aab)^{\omega}aa(ab)^{\omega-2}$ are in canonical form, as is also any $\kappa$-term of the type
$(ax^n)^{\omega+p} x^{\omega+q}$ or $y^{\omega+p} b(y^nb)^{\omega+q}$ where $x\in\{a,b\}^*b\{a,b\}^*$ and
$y\in\{a,b\}^*a\{a,b\}^*$ are Lyndon words and $n,p,q\in\Z$ with $n\geq 1$. On the contrary, the $\kappa$-terms $a(bab)^{\omega-1} (aa)^{\omega}$ and
$(ab)^{\omega} abab^{\omega-1}b^3$ are not in canonical form. We remark that our condition to the
canonical form of each crucial portion is different from the one that McCammond~\cite{mccammond2001} imposed to
crucial portions of $\omega$-terms. That is, if $x_i^{\omega+q_i} u_ix_{i+1}^{\omega+q_{i+1}}$ is in canonical
form, then  the $\omega$-term $x_i^{\omega} u_ix_{i+1}^{\omega}$ may not be in McCammond's normal form. To each
crucial portion $x_i^{\omega+q_i} u_ix_{i+1}^{\omega+q_{i+1}}$ there is an associated bi-infinite word $\cdots
x_ix_iu_ix_{i+1}x_{i+1}\cdots$, which we  denote by $\infee{x}{i}u_i\infdd{x}{i+1}$. From the definition of
canonical form, it is easy to deduce that, for $i,j\in\{1,\ldots,m-1\}$, the bi-infinite words
$\infee{x}{i}u_i\infdd{x}{i+1}$ and $\infee{x}{j}u_{j}\infdd{x}{j+1}$ coincide if and only if $x_i=x_j$,
$u_i=u_j$ and $x_{i+1}=x_{j+1}$.

As we shall see below, for each $\kappa$-term $\alpha$ of rank $1$ there is a $\kappa$-term $\alpha'$ in canonical form such
that $\Se\models\alpha=\alpha'$. The $\kappa$-term $\alpha'$ is unique by Theorem~\ref{theo:wp_rank1} below and
so we call it \emph{the canonical form of $\alpha$}. Moreover it can be computed from $\alpha$ by applying
elementary changes resulting from reading in either direction the following $\kappa$-identities, where
$i,j,n\in\Z$ with $n>0$,
\begin{xalignat*}{3}
&1.~(x^{n})^{\omega+j}=x^{\omega+nj}&&3_R.~x^{\omega+i}x=x^{\omega+i+1}
&&4.\ \;\!~(xy)^{\omega+i}x=x(yx)^{\omega+i}      \\[-1mm]
&2.~x^{\omega+i} x^{\omega+j}=x^{\omega+i+j}
&&3_L.~xx^{\omega+i}=x^{\omega+i+1}         &&
\end{xalignat*}
These $\kappa$-identities are easily shown to be valid in all finite
semigroups and so $\alpha=\alpha'$ is indeed verified by $\Se$. If a
subterm given by the left side of a $\kappa$-identity of type 1–-3
is replaced in a $\kappa$-term by the right side of the
$\kappa$-identity, then we say there is a \emph{contraction} of that
type. If the replacement is done in the opposite direction then we
say that there is an \emph{expansion} of that type. An application
of the $\kappa$-identity 4, in either direction, will be called a
\emph{shift}. For example, consider the $\kappa$-term
$\alpha=(bababa)^\omega b^{\omega-3} b(bb)^{\omega+1}$. The
following sequence of $\kappa$-terms, starting in $\alpha$, is
derived from the $\kappa$-identities 1–-4 above,
$$\begin{array}{rl}
\alpha=&\hspace*{-2mm}(ba)^\omega b^{\omega-3} b(bb)^{\omega+1}
=(ba)^\omega b^{\omega-3} bb^{\omega+2}
=(ba)^{\omega-1} ba b^{\omega-3} bb^{\omega+2}\\
=&\hspace*{-2mm} b(ab)^{\omega-1} a b^{\omega-3}
bb^{\omega+2}=b(ab)^{\omega-1} ab^{\omega-2}
b^{\omega+2}=b(ab)^{\omega-1} ab^{\omega}\\
=&\hspace*{-2mm}b(ab)^{\omega-1} abb^{\omega-1}=b(ab)^{\omega}
b^{\omega-1}.
\end{array}$$
 The two first steps in this derivation are
contractions of type $1$, the third step is an expansion of type $3_R$, the fourth is a shift,  the fifth is a
contraction of type $3_R$, the sixth step is a contraction of type $2$, the seventh is an expansion of type
$3_L$ and the final step is a contraction of type $3_R$. As $b(ab)^{\omega} b^{\omega-1}$ is in canonical form,
we conclude that this $\kappa$-term is the canonical form of $\alpha$. The steps of the algorithm to compute the
canonical form of an arbitrary $\kappa$-term of rank $1$ may be described as follows.

\begin{enumerate}[label=(\arabic*)]
\item Apply all possible contractions of type~1.

\item By means of an expansion of type $3$ and a shift,
  write each infinite power in the form $x^{\omega+q}$ where $x$ is a Lyndon word.

\item\label{item:contrations_type3} Apply all possible contractions of type~3.

\item Apply all possible contractions of type~2.

  \item Standardize each crucial portion $x^{\omega+p}uy^{\omega+q}$ as follows. By step~\ref{item:contrations_type3}, $x$
  is not a prefix and $y$ is not a suffix of $u$. Let $\ell$ be the minimum non negative integer such that
  $|uy^\ell|\geq |x|$. If $x$ is not a prefix of $uy^\ell$ then the crucial portion
  $x^{\omega+p}uy^{\omega+q}$ is already in canonical form. Otherwise $\ell\neq 0$. In this case, apply $\ell$ expansions of type
  $3_L$  to the limit term on the right side of the crucial portion, followed by all $n\geq 1$ possible contractions of type $3_R$. As shown in~\cite{Costa:2013}, the crucial portion $x^{\omega+p+n}v
  y^{\omega+q-\ell}$ thus obtained is already in canonical form.
\end{enumerate}

It is not difficult to check that the above procedure does indeed transform any $\kappa$-term of rank 1 into one
in canonical form.

%%%%%%%%%%%%%%%%%%%%%%%%%%%%%%%%%%%%%%%%%%%%%%%%%%%%%%%%%%%%%%%%%%%%%%%%%%%%%%%%%%%%%%%%%%%%%%%%%%%%%%%%%
\section{Identities involving $\kappa$-terms of rank at most 1}\label{section:kappa-terms_rank1}
The main objective of this section is to prove that $\Se$ and $\LG$
satisfy the same $\kappa$-identities involving $\kappa$-terms of
rank at most 1. This property cannot, obviously, be extended to rank
2 since $\Se$ does not verify the $\kappa$-identity $(x^\omega
yx^\omega)^\omega=x^\omega$ while $\LG$ verifies it (in fact this
$\kappa$-identity  defines $\LG$).

Consider rank 1  $\kappa$-terms $\pi=u'_0x_1^{\omega+q_1}u_1 \cdots x_m^{\omega+q_m}u_m$ and
$\rho=u'_mx_{m+1}^{\omega+q_{m+1}}u_{m+1} \cdots x_n^{\omega+q_n}u_n$ in canonical form. We associate to the
$\kappa$-identity $\pi=\rho$ an identity  ${\mathsf w}_\pi={\mathsf w}_\rho$ and a finite local group
$S_{\pi,\rho}$, of the form $\mathcal{S}(G,L,{\mathsf f})$, as follows. As we shall see, the identity ${\mathsf
w}_\pi={\mathsf w}_\rho$ (together with another condition easy to verify) serves to decide whether the
$\kappa$-identity $\pi=\rho$ holds over $\LG$ and $S_{\pi,\rho}$ is a test-semigroup for $\pi=\rho$. We set
$I_n=\{1,\ldots,n\}$ and $I_{m,n}=I_n\setminus\{ m,n\}$. We also set, for each $i\in I_n$, $\ell_i=|x_i|$ and
$x_i=a_{i1}a_{i2}\cdots a_{i\ell_i}$ with $a_{i1},a_{i2},\ldots, a_{i\ell_i}\in A$. For each
$j\in\{1,\ldots,\ell_i\}$, we let $x_{ij}$ be the conjugate $x_{ij}=a_{ij}\cdots a_{i\ell_i}a_{i1}\cdots
a_{i\;\!j-1}$ of $x_i$ and notice that $x_{i1}=x_i$. Now, let $\mathbb{q}=1+\mbox{max}\{|q_i|:i\in I_n\}$ and,
for each $i\in I_n$, consider the positive integer $\mathbb{q}_{i}=\mathbb{q}+q_{i}$. We associate a variable
$\mathsf{v}_x$ to each word $x\in A^+$ and
 a variable $\mathsf{v}_{x,u,y}$ to each triple  $(x,u,y)$ of words
 of $A^+$.  Let $\mathsf{V}=\{\mathsf{v}_{x_i},\mathsf{v}_{x_j,u_j,x_{j+1}}:i\in I_n,j\in
I_{m,n}\}$. We associate to $\pi$ and $\rho$ the following words
$\mathsf{w}_\pi,\mathsf{w}_\rho\in \mathsf{V}^+$,
$$\begin{array}{rl}
 \mathsf{w}_\pi&\hspace*{-3mm}=\mathsf{u}'_{0}\mathsf{v}_{x_1}^{\mathbb{q}_1}\mathsf{v\hspace*{-1pt}}_{x_1,u_1,x_2}
\mathsf{v}_{x_2}^{\mathbb{q}_2}\cdots \mathsf{v}_{x_{m-1},
u_{m-1},x_m}\mathsf{v}_{x_m}^{\mathbb{q}_m}\mathsf{u}_{m},\\[1mm]
\mathsf{w}_\rho&\hspace*{-3mm}=\mathsf{u}'_{m}\mathsf{v}_{x_{m+1}}^{\mathbb{q}_{m+1}}
\mathsf{v}_{x_{m+1},u_{m+1},x_{m+2}}
\mathsf{v}_{x_{m+2}}^{\mathbb{q}_{m+2}}\cdots \mathsf{v}_{x_{n-1},
u_{n-1},x_{n}}\mathsf{v}_{x_{n}}^{\mathbb{q}_n}\mathsf{u}_{n},
\end{array}$$ where $\mathsf{u}'_{0}$ (resp.\ $\mathsf{u}_{m}$,
$\mathsf{u}'_{m}$, $\mathsf{u}_{n}$) is the variable
$\mathsf{v}_{x_1}$ (resp.\ $\mathsf{v}_{x_m}$,
$\mathsf{v}_{x_{m+1}}$, $\mathsf{v}_{x_n}$) if $u'_{0}$ (resp.\
$u_{m}$, $u'_{m}$, $u_{n}$) is non-empty and is the empty word
otherwise. This completes the definition of the identity
$\mathsf{w}_\pi=\mathsf{w}_\rho$.

Let us now define the test-semigroup
$S_{\pi,\rho}=\mathcal{S}(G,L,{\mathsf f})$. Denote by
$F_{\mathsf V}$ the free group generated by the alphabet ${\mathsf
V}$. We select first a group
homomorphism $\eta:F_{\mathsf V}\rightarrow G$ into a finite group
$G$, associating to each variable $\mathsf{v}_{*}$ an element
$\eta(\mathsf{v}_{*})=g_{*}$, in such a way that: if the identity
$\mathsf{w}_\pi=\mathsf{w}_\rho$ is non-trivial, then
$g_\pi=\eta({\mathsf w}_\pi)$ and $g_\rho=\eta({\mathsf w}_\rho)$
are distinct elements of $G$. We observe the following
\begin{claim}
 We may assume that each element of $\eta(\mathsf{V})$  has an
order at least $2\mathbb{q}$.
\end{claim}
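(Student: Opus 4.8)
The plan is not to re-choose $\eta$ from scratch, but to modify the homomorphism $\eta\colon F_{\mathsf V}\to G$ already fixed in the paragraph preceding the claim by adjoining a cyclic direct factor that forces every generator to acquire large order, while keeping intact the property that $\eta(\mathsf{w}_\pi)\neq\eta(\mathsf{w}_\rho)$ whenever $\mathsf{w}_\pi=\mathsf{w}_\rho$ is non-trivial.

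Concretely, since $\mathbb{q}\geq 1$ the cyclic group $C=\Z/2\mathbb{q}\Z$ is non-trivial and its standard generator $1\in C$ has order exactly $2\mathbb{q}$. As $F_{\mathsf V}$ is free over $\mathsf{V}$, the rule $\mathsf{v}\mapsto 1$ ($\mathsf{v}\in\mathsf{V}$) extends uniquely to a homomorphism $\nu\colon F_{\mathsf V}\to C$. I would then replace $\eta$ by $\eta'=(\eta,\nu)\colon F_{\mathsf V}\to G\times C$, whose target is again a finite group. For each $\mathsf{v}\in\mathsf{V}$ one has $\eta'(\mathsf{v})=(\eta(\mathsf{v}),1)$, and the order of this element in the direct product is $\mathrm{lcm}\bigl(\mathrm{ord}\,\eta(\mathsf{v}),\,2\mathbb{q}\bigr)$, a positive multiple of $2\mathbb{q}$ and hence at least $2\mathbb{q}$. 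At the same time, if $\mathsf{w}_\pi=\mathsf{w}_\rho$ is non-trivial then the first coordinates of $\eta'(\mathsf{w}_\pi)$ and $\eta'(\mathsf{w}_\rho)$ equal $\eta(\mathsf{w}_\pi)$ and $\eta(\mathsf{w}_\rho)$, which differ by the choice of $\eta$, so $\eta'(\mathsf{w}_\pi)\neq\eta'(\mathsf{w}_\rho)$. Renaming $G\times C$ and $\eta'$ as $G$ and $\eta$ then yields a homomorphism with all the required properties.

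There is essentially no obstacle in this argument; the one point requiring care is exactly that the modification must not destroy the separating property selected just before the claim, which is why the additional order is obtained from a direct summand rather than by tampering with $\eta$ itself. (Alternatively, one could appeal directly to residual finiteness of $F_{\mathsf V}$: the set $\{\mathsf{w}_\pi\mathsf{w}_\rho^{-1}\}\cup\{\mathsf{v}^k:\mathsf{v}\in\mathsf{V},\ 1\leq k<2\mathbb{q}\}$ is a finite set of non-trivial elements of $F_{\mathsf V}$, and any finite quotient in which all of them survive provides a suitable $\eta$; the cyclic-factor construction above is just a more explicit way of producing such a quotient.)
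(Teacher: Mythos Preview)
Your argument is correct and is essentially identical to the paper's own proof: the paper also replaces $G$ by $G\times C_{2\mathbb{q}}$, sends each generator $\mathsf{v}_*$ to $(g_*,s)$ with $s$ a generator of $C_{2\mathbb{q}}$, and observes that the first coordinate still separates $\mathsf{w}_\pi$ from $\mathsf{w}_\rho$. Your parenthetical alternative via residual finiteness is a nice addition but is not in the paper.
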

\begin{proof} In order to attest the claim, let $C_{2\mathbb{q}}=\langle s\rangle$
denote the cyclic group of order $2\mathbb{q}$ generated by an
element $s$ and consider the group $G'=G\times C_{2\mathbb{q}}$. The
elements $g'_{*}=(g_{*},s)$ of $G'$ have an order at least
$2\mathbb{q}$. Let $\eta':F_{\mathsf V}\rightarrow G'$ be the group
homomorphism defined by $\eta'(\mathsf{v}_{*})=g'_{*}$. The first
components of $g'_\pi=\eta'({\mathsf w}_\pi)$ and
$g'_\rho=\eta'({\mathsf w}_\rho)$ are respectively $g_\pi$ and
$g_\rho$. If these are distinct elements of $G$, then $g'_\pi$ and
$g'_\rho$ are distinct elements of $G'$. Therefore, if necessary, we
replace $\eta$  by $\eta'$  thus proving the claim.
\end{proof}

We describe next the language $L$. We will fix three positive integers $\mathbb{i}<\mathbb{j}<\mathbb{k}$ and
define sets of words $W_\mathbb{i}$, $W_\mathbb{j}$ and $W_\mathbb{k}$ as follows. We choose first an integer
$\mathbb{i}>|u'_0x_1u_1\cdots x_mu_m|+|u'_mx_{m+1}u_{m+1}\cdots x_nu_n|$ and let
$W_\mathbb{i}=\{u'_0x_1^\mathbb{i},u'_mx_{m+1}^\mathbb{i},x_{m}^\mathbb{i}u_m,x_{n}^\mathbb{i}u_n\}$. Suppose
now that $n>2$. Each crucial portion $x_i^{\omega+q_i}u_ix_{i+1}^{\omega+q_{i+1}}$ ($i\in I_{m,n}$) of $\pi$ or
$\rho$ determines a bi-infinite word $x_i^{-\infty} u_ix_{i+1}^{+\infty}$, that is not periodic (i.e., it is not
of the form $\cdots vvv\cdots$, the infinite repetition  for both sides of the same finite word $v$) by
definition of canonical form. Therefore, for every $i,i'\in I_{m,n}$ and $\mathbb{j}$ large enough, either
$x_i^{\mathbb{j}}
 u_ix_{i+1}^{\mathbb{j}}$ does not occur in $x_{i'}^{-\infty}
 u_{i'}x_{{i'}+1}^{+\infty}$, or, as a consequence of the rank 1 canonical form
 definition, it has exactly one occurrence in $x_{i'}^{-\infty}
 u_{i'}x_{{i'}+1}^{+\infty}$ (in which case $x_i=x_{i'}$, $u_i= u_{i'}$ and $x_{i+1}=x_{{i'}+1}$). We then say that $x_i^{\mathbb{j}}
 u_ix_{i+1}^{\mathbb{j}}$ \emph{is synchronized in} $x_{i'}^{-\infty}
 u_{i'}x_{{i'}+1}^{+\infty}$. We fix one such $\mathbb{j}$ with $\mathbb{j}>\mathbb{i}$, let
 $W_\mathbb{j}=\{x_i^\mathbb{j}u_ix_{i+1}^\mathbb{j}:i\in I_{m,n}\}$ and observe that $W_\mathbb{i}$ and $W_\mathbb{j}$ are disjoint sets. If $n=2$ then $\pi$ and $\rho$ have no crucial portions and we
 let $\mathbb{j}>\mathbb{i}$ and take for $W_\mathbb{j}$ the empty set. For any $n$, we fix at last an integer $\mathbb{k}>\mathbb{j}$  such
 that $x_{iq}^\mathbb{k}\not\in F(W_\mathbb{j})$ for every $i\in I_n$ and $q\in\{1,\ldots,\ell_i\}$. Since we may take for $\mathbb{k}$ any sufficiently large positive integer, we pick out an option
  such that $\mathbb{k}'=\mathbb{k}+1+\mathbb{q}$ is a multiple of the order of each element $g_{x_i}$ with $i\in I_n$.  Finally, we set $W_\mathbb{k}=\{x_{iq}^\mathbb{k}:i\in
 I_n, q\in\{1,\ldots,\ell_i\}\}$ and $L=F(W_\mathbb{i}\cup W_\mathbb{j}\cup
 W_\mathbb{k})$.

We conclude the definition of the semigroup $S_{\pi,\rho}$ by
identifying the mapping ${\mathsf f}:L\cup\ddot L\rightarrow G$. We
consider the subset
$W'_\mathbb{k}=\{x_1^\mathbb{k},\ldots,x_n^\mathbb{k}\}$ of
$W_\mathbb{k}$ and define
$${\mathsf f}(w)=\left\{\begin{array}{ll}
1_G&\mbox{if $w\in (L\cup\ddot L)\setminus (W_\mathbb{j}\cup W'_\mathbb{k})$}\\
g_{x_i}^{-1}g_{x_i,u_i,x_{i+1}}g_{x_{i+1}}^{-t_i-1}&\mbox{if
$w=x_i^\mathbb{j}
 u_ix_{i+1}^\mathbb{j}\in W_\mathbb{j}$}\\
 g_{x_i}&\mbox{if $w=x_i^\mathbb{k}\in W'_\mathbb{k}$}
\end{array}\right.$$
where $t_i\geq 0$ is the biggest integer such that  $x_{i+1}^{t_i}$
is a suffix of  $x_{i}^\mathbb{j} u_i$.

We may now prove the main result of this section.
\begin{theorem}\label{theo:wp_rank1}
Under the above assumptions and with the above notations, the
following conditions are equivalent:
\begin{enumerate}
\item\label{item:a} $\Se\models\pi=\rho$.

\item\label{item:b} $\LG\models\pi=\rho$.

\item\label{item:c} $S_{\pi,\rho}\models\pi=\rho$.

\item\label{item:d} ${\mathsf w}_\pi={\mathsf w}_\rho$, $u'_0=u'_m$ and
$u_m=u_n$.

\item\label{item:e} $\pi$ and $\rho$ are the same \kt.
\end{enumerate}
\end{theorem}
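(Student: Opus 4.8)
The plan is to establish the cycle of implications $(e)\Rightarrow(a)\Rightarrow(b)\Rightarrow(c)\Rightarrow(d)\Rightarrow(e)$. Three of these are immediate. If $\pi$ and $\rho$ are the same \kt\ (after identifying the index $i$ of $\pi$ with the index $m+i$ of $\rho$) then they take the same value on every finite semigroup, so $(e)\Rightarrow(a)$; since $\LG$ consists of finite semigroups, $(a)\Rightarrow(b)$; and since, by Corollary~\ref{corol:semigroups_S_Z_T}, $S_{\pi,\rho}=\mathcal{S}[G,L,{\mathsf f}]$ is a finite local group --- finite because $G$ is finite and $L=F(W_\mathbb{i}\cup W_\mathbb{j}\cup W_\mathbb{k})$ is finite --- hence lies in $\LG$, we get $(b)\Rightarrow(c)$.

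The implication $(d)\Rightarrow(e)$ is combinatorial. By construction, distinct words give distinct variables $\mathsf{v}_x$, distinct triples give distinct variables $\mathsf{v}_{x,u,y}$, and these two families of variables are disjoint; moreover $\mathbb{q}_i=\mathbb{q}+q_i\geq 1$ for each $i$, because $\mathbb{q}>|q_i|$. Hence in $\mathsf{w}_\pi$ the ``triple'' variables $\mathsf{v}_{x_j,u_j,x_{j+1}}$ occur as separators between non-empty blocks of a single ``simple'' variable $\mathsf{v}_{x_i}$, and likewise in $\mathsf{w}_\rho$, so the decomposition of $\mathsf{w}_\pi$ (resp.\ $\mathsf{w}_\rho$) into blocks and separators is unique. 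Reading off the equality $\mathsf{w}_\pi\equiv\mathsf{w}_\rho$ block by block, and using $u'_0=u'_m$ and $u_m=u_n$ to fix the boundary offsets contributed by $\mathsf{u}'_0,\mathsf{u}_m,\mathsf{u}'_m,\mathsf{u}_n$, one obtains that $\pi$ and $\rho$ have the same number of powers, that $x_i=x_{m+i}$ and $u_i=u_{m+i}$ for all relevant $i$, and that $\mathbb{q}_i=\mathbb{q}_{m+i}$, hence $q_i=q_{m+i}$; that is, $\pi$ and $\rho$ are the same \kt.

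The core of the proof is $(c)\Rightarrow(d)$, for which one must compute the values $\pi^S$ and $\rho^S$ of $\pi$ and $\rho$ in $S=S_{\pi,\rho}$ under the interpretation sending each letter $a\in A$ to itself (an element of $L$, since $A\subseteq L$). First one evaluates a single power $x_i^{\omega+q_i}$. For $N$ large, $\mbox{sc}_L[x_i^N]$ has first and last coordinate $x_i^\mathbb{k}\in L$, and all of its other $\ddot L$- and $L$-coordinates lie in $(L\cup\ddot L)\setminus(W_\mathbb{j}\cup W'_\mathbb{k})$ except for those $L$-coordinates equal to $x_i^\mathbb{k}$; here the largeness of $\mathbb{k}$ together with the requirement $x_{iq}^\mathbb{k}\notin F(W_\mathbb{j})$ is exactly what makes $x_i^\mathbb{k}$ the longest prefix and the longest suffix of $x_i^N$ lying in $L$. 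It follows that $\hat{\mathsf f}(x_i^N)=g_{x_i}^{N-\mathbb{k}+1}$, so that $x_i^N$ represents the element $(x_i^\mathbb{k},g_{x_i}^{N-\mathbb{k}-1},x_i^\mathbb{k})$ of the minimal ideal $J=L^1\times G\times L^1$; the idempotent in this $\mathcal{H}$-class is $(x_i^\mathbb{k},g_{x_i}^{-\mathbb{k}-1},x_i^\mathbb{k})$, and, since $\mathbb{k}'=\mathbb{k}+1+\mathbb{q}$ was chosen to be a multiple of the order of $g_{x_i}$, the value of $x_i^{\omega+q_i}$ in $S$ is $(x_i^\mathbb{k},g_{x_i}^{\mathbb{q}_i},x_i^\mathbb{k})$. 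Next one multiplies these elements together with the values of $u'_0,u_1,\ldots,u_m$ (all of which lie in $L$), using the Rees matrix product of $J$ and Lemma~\ref{lemma:property_hat_function} to handle the $\hat{\mathsf f}$-bookkeeping. The key local computation is that of a crucial portion $x_i^{\omega+q_i}u_ix_{i+1}^{\omega+q_{i+1}}$: by the synchronization condition imposed on $\mathbb{j}$, the word appearing between the two $x_i^\mathbb{k}$-tails contains the factor $x_i^\mathbb{j}u_ix_{i+1}^\mathbb{j}\in W_\mathbb{j}$ exactly once, contributing ${\mathsf f}(x_i^\mathbb{j}u_ix_{i+1}^\mathbb{j})=g_{x_i}^{-1}g_{x_i,u_i,x_{i+1}}g_{x_{i+1}}^{-t_i-1}$, in which the factors $g_{x_i}^{-1}$ and $g_{x_{i+1}}^{-t_i-1}$ are designed to cancel the spurious copies of $g_{x_i}$ and $g_{x_{i+1}}$ produced by the neighbouring powers $x_i^\mathbb{k}$ and $x_{i+1}^\mathbb{k}$ (which belong to $W'_\mathbb{k}$), so that the net contribution to the group coordinate is exactly $g_{x_i,u_i,x_{i+1}}=\eta(\mathsf{v}_{x_i,u_i,x_{i+1}})$. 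Similarly the initial portion contributes $g_{x_1}^{\mathbb{q}_1}$, preceded by one extra copy of $g_{x_1}=\eta(\mathsf{u}'_0)$ exactly when $u'_0\neq 1$, and symmetrically for the final portion; hence the group coordinate of $\pi^S$ equals $\eta(\mathsf{w}_\pi)$, and that of $\rho^S$ equals $\eta(\mathsf{w}_\rho)$. Finally, the left coordinate of $\pi^S$ is the longest prefix of $u'_0x_1^\mathbb{k}$ lying in $L$; it is $x_1^\mathbb{k}$ when $u'_0=1$ and otherwise a word of length strictly greater than $|u'_0|$ beginning with $u'_0$, and --- using the choice $\mathbb{i}>|u'_0x_1u_1\cdots x_mu_m|+|u'_mx_{m+1}u_{m+1}\cdots x_nu_n|$ and the canonical form condition that $x_1$ is not a suffix of $u'_0$ --- this word, together with knowledge of $x_1$, determines $u'_0$ and in particular determines whether $u'_0$ is empty. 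The right coordinate of $\pi^S$ behaves symmetrically in $x_m$ and $u_m$.

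With these computations in hand, $(c)\Rightarrow(d)$ follows by comparing $\pi^S$ and $\rho^S$ coordinate by coordinate in $J$: equality of the group coordinates gives $\eta(\mathsf{w}_\pi)=\eta(\mathsf{w}_\rho)$, whence $\mathsf{w}_\pi\equiv\mathsf{w}_\rho$ by the defining property of $\eta$ (so that in particular $x_1=x_{m+1}$); equality of the left coordinates then forces $u'_0=u'_m$, and equality of the right coordinates forces $u_m=u_n$. The main obstacle is precisely this step: carrying the evaluation of $\pi$ in $S_{\pi,\rho}$ cleanly through all the boundary cases (whether $u'_0$ and $u_m$ are empty, whether $u'_0$ is a proper suffix of a power of $x_1$, and so on) and, above all, checking that the exponent bookkeeping built into ${\mathsf f}$ on $W_\mathbb{j}$ and on $W'_\mathbb{k}$ makes the group coordinate of $\pi^S$ come out to be precisely $\eta(\mathsf{w}_\pi)$, rather than merely conjugate or otherwise related to it.
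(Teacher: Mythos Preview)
Your proposal is correct and follows essentially the same route as the paper. The paper also treats $(e)\Rightarrow(a)\Rightarrow(b)\Rightarrow(c)$ as immediate and concentrates on $(c)\Rightarrow(d)$, computing $\phi(x_i^{\omega+q_i})=(x_i^{\mathbb{k}},g_{x_i}^{\mathbb{q}_i},x_i^{\mathbb{k}})$ and then $\hat{\mathsf f}(x_i^{\mathbb{k}}u_ix_{i+1}^{\mathbb{k}})=g_{x_i,u_i,x_{i+1}}$ via two internal lemmas before reading off $\phi(\pi)=(z'_0,g_\pi,z_m)$; the only visible differences are organizational --- the paper packages the power and crucial-portion computations as separate lemmas and reaches the idempotent via $\phi(x_i^{\mathbb{k}'})^2=\phi(x_i^{\mathbb{k}'})$ rather than via the Rees matrix sandwich entry --- and the paper declares $(d)\Rightarrow(e)$ ``trivial'' where you spell out the block-and-separator reading of $\mathsf{w}_\pi=\mathsf{w}_\rho$.
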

\begin{proof} The sequence of implications~\ref{item:d}$\Rightarrow$\ref{item:e}$\Rightarrow$\ref{item:a}$\Rightarrow$\ref{item:b}$\Rightarrow$\ref{item:c} holds
trivially. It remains therefore to prove the implication~\ref{item:c}$\Rightarrow$\ref{item:d}. Thus suppose
that $S_{\pi,\rho}\models\pi=\rho$ and let $\phi: T_A^\kappa\rightarrow S_{\pi,\rho}$ be the homomorphism of
$\kappa$-semigroups that coincides with $\varphi\circ \check{\mathsf f}$ on $A^+$, where $T_A^\kappa$ denotes
the $\kappa$-semigroup of all $\kappa$-terms and $\varphi$ is the isomorphism from $\mathcal{Z}[G,L,{\mathsf
f}]$ onto $S_{\pi,\rho}$ defined in Section~\ref{Presentations-local-groups}. Notice that, $\phi(w)=w$ when
$w\in L$ and, for $w\in A^+\setminus L$ with $\mbox{sc}_L[w]=(w_0,\ddot w_1,w_1,\ldots,\ddot w_p,w_p)$,
$\check{\mathsf f}(w)=w_0\bar{\mathsf f}(w)w_p$ and $\phi(w)=(w_0,\bar{\mathsf f}(w),w_p)$ where $\bar{\mathsf
f}(w)={\mathsf f}(\ddot w_1){\mathsf f}(w_1){\mathsf f}(\ddot w_2)\cdots {\mathsf f}(\ddot w_p)$. We begin by
computing the value under $\phi$ of each power $x_i^{\omega+q_i}$ with $i\in I_n$. By definition of
$W_\mathbb{k}$, $x_i^\mathbb{k}$ belongs to $L$, so that $\phi(x_i^\mathbb{k})=x_i^\mathbb{k}$. On the other
hand, by the assumption on $\mathbb{k}$, it is clear that $x_i^{\mathbb{k}+1}\notin L$, whence
$\phi(x_i^{\mathbb{k}+1})$ belongs to the minimal ideal of $S_{\pi,\rho}$. Therefore, the kernel $K_{x_i}$ of
the subsemigroup $\langle x_i\rangle$ of $S_{\pi,\rho}$ generated by $x_i(=\phi(x_i))$, is of the form
$$K_{x_i}=\{\phi(x_i^{\mathbb{k}+1}),\phi(x_i^{\mathbb{k}+2}),\ldots,
\phi(x_i^{\mathbb{k}+p_i})\}$$ with $p_i\geq 1$  minimal such that
$\phi(x_i^{\mathbb{k}+1+p_i})=\phi(x_i^{\mathbb{k}+1})$ and thus
\begin{equation}\label{eq:omega+qi}
\exists q'_i\in\{1,\ldots,p_i\},\
\phi(x_i^{\omega+q_i})=\phi(x_i^{\mathbb{k}+q'_i}).
\end{equation}
The positive integers $p_i$ and $q'_i$ are specified in the
following lemma.
\begin{lemma}\label{lemma:p_iandq'_i}
\begin{enumerate}
\item\label{item:a} $p_i$ is  the order of the element $g_{x_i}$ in $G$;

\item\label{item:b} $q'_i=\mathbb{q}_i+1$ (which is equivalent to saying that $\mathbb{k}+q'_i=\mathbb{k}'+q_i$).
\end{enumerate}
\end{lemma}
\begin{proof} Notice first that, for every $j\in\{1,\ldots,\ell_i\}$, the
word $v_j=x_{ij}^\mathbb{k} a_{ij}$ belongs to $\ddot{L}$,
$(v_j)_\alpha=x_{ij}^\mathbb{k}$,
$(v_j)_\omega=x_{i\;\!j+1}^\mathbb{k}$ if $j\neq\ell_i$ and
$(v_{\ell_i})_\omega=x_{i1}^\mathbb{k}$. Hence, one verifies that,
for $q\geq 1$,
$$\mbox{sc}_L[x_i^{\mathbb{k}+q}]=(x_{i1}^\mathbb{k},v_1,\ldots,x_{i\ell_i}^\mathbb{k},v_{\ell_i},
x_{i1}^\mathbb{k},v_1,\ldots,x_{i\ell_i}^\mathbb{k},v_{\ell_i},\ldots,
x_{i1}^\mathbb{k},v_1,\ldots,x_{i\ell_i}^\mathbb{k},v_{\ell_i},x_{i1}^\mathbb{k})$$
with $q$ repetitions of
$x_{i1}^\mathbb{k},v_1,\ldots,x_{i\ell_i}^\mathbb{k},v_{\ell_i}$. Therefore,
$$\bar{\mathsf f}(x_i^{\mathbb{k}+q})={\mathsf f}(x_{i1}^\mathbb{k})^{-1}\left({\mathsf f}(x_{i1}^\mathbb{k}){\mathsf f}(v_1)\cdots {\mathsf f}(x_{i\ell_i}^\mathbb{k}){\mathsf f}(v_{\ell_i})\right)^q.$$
On the other hand, ${\mathsf f}(x_{i1}^\mathbb{k})=g_{x_i}$ and ${\mathsf f}(v_j)=1_G$ for all $i$ and $j$, and ${\mathsf
f}(x_{ij}^\mathbb{k})=1_G$ for $j\neq 1$. It then follows that
\begin{equation}\label{eq:x_ikq}
\phi(x_i^{\mathbb{k}+q})=(x_{i}^\mathbb{k},\bar{\mathsf
f}(x_i^{\mathbb{k}+q}),x_{i}^\mathbb{k})=(x_{i}^\mathbb{k},g_{x_i}^{q-1},x_{i}^\mathbb{k}).
\end{equation}
We may now employ~\eqref{eq:x_ikq} to deduce that, for $p\geq 1$,
$\phi(x_i^{\mathbb{k}+1+p})=\phi(x_i^{\mathbb{k}+1})$ if and only if
$g_{x_i}^{p}=1_G$. Hence, the least positive integer $p_i$ such that
 $\phi(x_i^{\mathbb{k}+1+p_i})=\phi(x_i^{\mathbb{k}+1})$ is the order of $g_{x_i}$, thus proving~\ref{item:a}.

 For~\ref{item:b}, we show that $\mathbb{k}+q'_i=\mathbb{k}'+q_i$.  In view of
Claim 1, $p_{i}\geq 2\mathbb{q}$, whence
$\mathbb{k}'+q_i\in\{\mathbb{k}+1,\ldots,\mathbb{k}+ p_i\}$.
Therefore, to prove the equality $\mathbb{k}+q'_i=\mathbb{k}'+q_i$
it suffices to verify that
$\phi(x_i^{\omega})=\phi(x_i^{\mathbb{k}'})$. For this,
using~\eqref{eq:x_ikq} and the fact that
$g_{x_i}^{\mathbb{k}'}=1_G$, since $p_{i}$ divides $\mathbb{k}'$ by
the choice of $\mathbb{k}'$, we deduce
$$\phi\bigl((x_i^{\mathbb{k}'})^2\bigr)=\phi(x_i^{\mathbb{k}+1+\mathbb{q}+\mathbb{k}'})=(x_i^\mathbb{k},g_{x_i}^{\mathbb{q}+\mathbb{k}'},x_i^\mathbb{k})
=(x_i^\mathbb{k},g_{x_i}^{\mathbb{q}},x_i^\mathbb{k})=\phi(x_i^{\mathbb{k}'}).$$
Thus, $\phi(x_i^{\mathbb{k}'})$ is an idempotent power of
$\phi(x_i)$, whence $\phi(x_i^{\omega})=\phi(x_i^{\mathbb{k}'})$.
\end{proof}

Using~\eqref{eq:omega+qi},~\eqref{eq:x_ikq}, Lemma~\ref{lemma:p_iandq'_i}~\ref{item:b} and the fact that $\phi$
and $\check{\mathsf f}$ are homomorphisms such that $\check{\mathsf f}\circ\check{\mathsf f}=\check{\mathsf f}$,
one deduces that
$$\begin{array}{rll}
\phi(\pi)&\hspace*{-2mm}=\phi(u'_0x_1^{\mathbb{k}+{\mathbb
q}_1+1}u_1 x_2^{\mathbb{k}+{\mathbb q}_2+1}\cdots
u_{m-1}x_m^{\mathbb{k}+{\mathbb q}_m+1}u_m)\\

&\hspace*{-2mm}=\varphi\bigl(\check{\mathsf
f}(u'_0x_1^{\mathbb{k}}g_{x_1}^{\mathbb{q}_1}x_1^{\mathbb{k}}u_1
x_2^{\mathbb{k}}g_{x_2}^{\mathbb{q}_2}x_2^{\mathbb{k}}\cdots
u_{m-1}x_m^{\mathbb{k}}g_{x_m}^{\mathbb{q}_m}x_m^{\mathbb{k}}u_m)\bigr)\\

&\hspace*{-2mm}=\varphi\bigl(\grave{\mathsf
f}(u'_0x_1^{\mathbb{k}})g_{x_1}^{\mathbb{q}_1}\hat{\mathsf
f}(x_1^{\mathbb{k}}u_1 x_2^{\mathbb{k}})g_{x_2}^{\mathbb{q}_2}\cdots
\hat{\mathsf
f}(x_{m-1}^{\mathbb{k}}u_{m-1}x_m^{\mathbb{k}})g_{x_m}^{\mathbb{q}_m}\acute{\mathsf
f}(x_m^{\mathbb{k}}u_m)\bigr).
\end{array}$$
Analogously, $\phi(\rho)=\varphi\bigl(\grave{\mathsf
f}(u'_mx_{m+1}^{\mathbb{k}})g_{x_{m+1}}^{\mathbb{q}_{m+1}}\hat{\mathsf
f}(x_{m+1}^{\mathbb{k}}u_{m+1}
x_{m+2}^{\mathbb{k}})g_{x_{m+2}}^{\mathbb{q}_{m+2}}\cdots
\hat{\mathsf
f}(x_{n-1}^{\mathbb{k}}u_{n-1}x_n^{\mathbb{k}})g_{x_n}^{\mathbb{q}_n}\acute{\mathsf
f}(x_n^{\mathbb{k}}u_n)\bigr)$. Let us now prove the following
lemma.
\begin{lemma}\label{lemma:eval_xiuixi+1} Let $i\in I_{m,n}$, $j\in\{1,m+1\}$ and $k\in\{m,n\}$.
\begin{enumerate}
\item\label{item:a} $\hat{\mathsf f}(x_i^{\mathbb{k}}u_i x_{i+1}^{\mathbb{k}})={\mathsf
f}(x_i^{\mathbb{k}}){\mathsf f}(x_i^{\mathbb{j}}u_i
x_{i+1}^{\mathbb{j}}){\mathsf f}(x_{i+1}^{\mathbb{k}})^{t_i+1}$.

\item\label{item:b} If $u'_{j-1}$ is the empty word, then $\grave{\mathsf
f}(u'_{j-1}x_j^{\mathbb{k}})=x_j^{\mathbb{k}}$. Otherwise,
$\grave{\mathsf
f}(u'_{j-1}x_{j}^{\mathbb{k}})=u'_{j-1}x_j^{r_j}x'_j\;{\mathsf
f}(x_j^{\mathbb{k}})$ for some integer $\mathbb{i}\leq
r_j<\mathbb{k}$ and some proper prefix $x'_j$ of $x_j$.

\item\label{item:c} If $u_{k}$ is the empty word, then $\acute{\mathsf f}(x_k^{\mathbb{k}}u_k)=x_k^{\mathbb{k}}$.
Otherwise, $\acute{\mathsf f}(x_{k}^{\mathbb{k}}u_{k})={\mathsf
f}(x_k^{\mathbb{k}})x'_kx_k^{r_k}u_{k}$ for some integer
$\mathbb{i}\leq r_k<\mathbb{k}$ and some proper suffix $x'_k$ of
$x_k$.
\end{enumerate}
\end{lemma}
\begin{proof} For~\ref{item:a}, write $w=x_i^{\mathbb{k}}u_i
x_{i+1}^{\mathbb{k}}$ and  notice that $w\not\in L$, whence
$\mbox{sc}_L[w]$ is of the form $\mbox{sc}_L[w]=(w_0,\ddot
w_1,w_1,\ldots,\ddot w_p,w_p)$ with $p\geq 1$. Moreover, since
$x_i^{\mathbb{k}}{\mathtt i}_1(u_ix_{i+1}), {\mathtt
t}_1(x_iu_i)x_{i+1}^{\mathbb{k}}\not\in L$, $w_0=x_i^{\mathbb{k}}$
and $w_p=x_{i+1}^{\mathbb{k}}$. By definition of ${\mathsf f}$,
${\mathsf f}(\ddot w_j)={\mathsf f}(w_{j'})=1_G$ for every
$j\in\{1,\ldots,p\}$ and $j'\in\{1,\ldots,p-1\}$ such that
$w_{j'}\not\in W_\mathbb{j}\cup W'_\mathbb{k}$. By definition of
$\hat{\mathsf f}$, it results that
\begin{equation}\label{eq:hatfw}
 \hat{\mathsf f}(w)={\mathsf f}(w_0){\mathsf f}(w_{j_1})\cdots {\mathsf f}(w_{j_r}){\mathsf f}(w_p),
\end{equation}
 where $w_0,w_{j_1},\ldots
,w_{j_r},w_p$ (with $0<j_1<\cdots<j_r<p$) is the sequence of all
coordinates of $w$ that belong to the set $W_\mathbb{j}\cup
W'_\mathbb{k}$.

By definition of $t_i$, $x_i^{\mathbb j} u_i=v_ix_{i+1}^{t_i}$ and
$x_{i+1}$ is not a suffix of $v_i$. Hence
$w=x_i^{\mathbb{k}-\mathbb{j}}v_ix_{i+1}^{\mathbb{k}+t_i}$ and
$x_{i+1}^{\mathbb{k}}$ has $t_i+1$ maximal occurrences on the suffix
$x_{i+1}^{\mathbb{k}+t_i}$ of $w$. It is then clear that $r\geq t_i$
and $w_{j_{r-t_i+1}}=\cdots =w_{j_{r}}=w_p=x_{i+1}^{\mathbb{k}}$. On
the other hand, $x_i^{\mathbb{k}}u_i
x_{i+1}^{\mathbb{k}}=x_i^{\mathbb{k}-\mathbb{j}}x_i^{\mathbb{j}}u_i
x_{i+1}^{\mathbb{j}}x_{i+1}^{\mathbb{k}-\mathbb{j}}$ and
$x_i^{\mathbb{j}}u_i x_{i+1}^{\mathbb{j}}\in W_\mathbb{j}$.
Moreover, since, by the choice of $\mathbb{j}$, $x_i^{\mathbb{j}}
 u_ix_{i+1}^{\mathbb{j}}$ is synchronized in $x_j^{-\infty}
 u_jx_{j+1}^{+\infty}$ for every $j\in I_{m,n}$, the words $a_{i\ell_i}x_i^{\mathbb{j}}u_i
x_{i+1}^{\mathbb{j}}$ and $x_i^{\mathbb{j}}u_i
x_{i+1}^{\mathbb{j}}a_{i+1\;\! 1}$ do  not belong to $L$. Hence, the
word $x_i^{\mathbb{j}}u_ix_{i+1}^{\mathbb{j}}\in L$ has a maximal
occurrence in $w$ and so $x_i^{\mathbb{j}}
 u_ix_{i+1}^{\mathbb{j}}=w_{j_s}$ for some $s$ such that $1\leq s\leq r-t_i$. We
 claim that $1=s=r-t_i$ (whence $r=t_i+1$). This claim,
 together with~\eqref{eq:hatfw}, completes the proof of~\ref{item:a}. To show the claim, notice first that the above mentioned occurrence
 of $x_i^{\mathbb{j}}
 u_ix_{i+1}^{\mathbb{j}}$ is the unique occurrence of an element of
 $ W_\mathbb{j}$ in $w$ because these elements are synchronized in $x_i^{-\infty}
 u_ix_{i+1}^{+\infty}$. Suppose, on the other hand, that some element $x_j^\mathbb{k}\in
 W'_\mathbb{k}$ occurs in $w$. Then, by the assumption on $\mathbb{k}$, $x_j^\mathbb{k}$
 overlaps the prefix $x_i^{\mathbb{k}}$ or the suffix $x_{i+1}^{\mathbb{k}}$ of $w$ for a
 factor sufficiently large to deduce (from Fine and
Wilf's Theorem and the fact that $x_i$, $x_{i+1}$ and $x_j$ are Lyndon words)  that $x_j=x_i$ or $x_j=x_{i+1}$.
Since, by definition of canonical form, $x_i$ is not a prefix of $u_ix_{i+1}^{\mathbb j}$ and, as we have seen
above, $x_{i+1}$ is not a suffix of $v_i$, this means that the occurrence of $x_j^\mathbb{k}$ determines one of
the already identified coordinates $w_0,w_{j_{r-t_i+1}},\ldots ,w_{j_r},w_p$ of $w$, thus proving that $j_s=j_1$
and $j_{r-t_i+1}=j_2$. This shows the claim and concludes the proof of~\ref{item:a}.

We now establish~\ref{item:b}. Let $w=u'_{j-1}x_j^{\mathbb{k}}$. If $u'_{j-1}$ is the empty word, then
$w=x_j^{\mathbb{k}}\in L$, whence, by definition of the function $\grave{\mathsf f}$, $\grave{\mathsf
f}(w)=x_j^{\mathbb{k}}$. Suppose that $u'_{j-1}$ is non-empty. Then $w\not\in L$ and, so, $\mbox{sc}_L[w]$ is of
the form $\mbox{sc}_L[w]=(w_0,\ddot w_1,w_1,\ldots,\ddot w_p,w_p)$ with $p\geq 1$. As $x_{j}^{\mathbb{k}}$ is
the longest suffix of $w$ in $L$, $w_p=x_j^{\mathbb{k}}$. On the other hand, since $u'_{j-1}x_j^{\mathbb{i}}\in
W_\mathbb{i}\subseteq L$, the longest prefix of $w$ in $L$ is of the form $u'_{j-1}x_j^{r_j}x'_j$ for some
integer $\mathbb{i}\leq r_j<\mathbb{k}$ and some proper prefix $x'_j$ of $x_j$, whence
$w_0=u'_{j-1}x_j^{r_j}x'_j$. By definition of canonical form and of the set $W_\mathbb{j}\cup W'_\mathbb{k}$ it
is clear that the intermediate coordinates $\ddot w_1,w_1,\ldots,\ddot w_p$ of $w$ do not belong to this set. By
definition of $\grave{\mathsf f}$ it then follows that $\grave{\mathsf f}(w)=w_0{\mathsf
f}(w_p)=u'_{j-1}x_j^{r_j}x'_j\;{\mathsf f}(x_j^{\mathbb{k}})$, thus proving~\ref{item:b}.

Condition~\ref{item:c} is symmetrical to condition~\ref{item:b} and
so it can be proved analogously, thus completing the proof of the
lemma.
\end{proof}

Lemma~\ref{lemma:eval_xiuixi+1}~\ref{item:a} and the definition of
${\mathsf f}$ show that, for every $i\in I_{m,n}$,
$$\hat{\mathsf f}(x_i^{\mathbb{k}}u_i
x_{i+1}^{\mathbb{k}})=g_{x_i}g_{x_i}^{-1}g_{x_i,u_i,x_{i+1}}g_{x_{i+1}}^{-t_i-1}g_{x_{i+1}}^{t_i+1}=g_{x_i,u_i,x_{i+1}}.$$
Combining this with conditions~\ref{item:b} and~\ref{item:c} of Lemma~\ref{lemma:eval_xiuixi+1} and with the
previous calculations, we may now finish the evaluation of $\pi$ and $\rho$ in $S_{\pi,\rho}$. Let $z'_0$ be the
first coordinate of $u'_{0}x_1^{\mathbb{k}}$ and $z_m$ be the last coordinate of $x_m^{\mathbb{k}}u_m$
determined by $L$. Let $h'_0=g_{x_1}$  if $u'_{0}\neq 1$ and  $h'_0=1_G$ otherwise. Analogously, let
$h_m=g_{x_m}$   if
  $u_{m}\neq 1$ and  $h_m=1_G$ otherwise. Then
$$\begin{array}{rll}
\phi(\pi)
&\hspace*{-2mm}=\varphi(z'_0h'_0g_{x_1}^{\mathbb{q}_1}g_{x_1,u_1,
x_{2}}g_{x_2}^{\mathbb{q}_2}\cdots
g_{x_{m-1},u_{m-1},x_{m}}g_{x_m}^{\mathbb{q}_m}h_mz_m)\\

&\hspace*{-2mm}=(z'_0,h'_0g_{x_1}^{\mathbb{q}_1}g_{x_1,u_1,
x_{2}}g_{x_2}^{\mathbb{q}_2}\cdots
g_{x_{m-1},u_{m-1},x_{m}}g_{x_m}^{\mathbb{q}_m}h_m,z_m)\\

&\hspace*{-2mm}=(z'_0,g_\pi,z_m)
\end{array}$$
and, analogously, $\phi(\rho)=(z'_m,g_\rho,z_n)$. Since $S_{\pi,\rho}$ verifies the $\kappa$-identity
$\pi=\rho$,  by hypothesis, we have that $\phi(\pi)=\phi(\rho)$. Then $z'_0=z'_m$, $g_\pi=g_\rho$ and $z_m=z_n$.
From  $g_\pi=g_\rho$ we deduce that the identity ${\mathsf w}_\pi={\mathsf w}_\rho$ is trivial. In particular,
$\mathsf{u}'_{0}\mathsf{v}_{x_1}^{\mathbb{q}_1}=\mathsf{u}'_{m}\mathsf{v}_{x_{m+1}}^{\mathbb{q}_{m+1}}$ and, so,
$x_1=x_{m+1}$. Now, it results from the equality $z'_0=z'_m$ and the fact that $\pi$ and $\rho$ are in canonical
form that $u'_0=u'_m$. By symmetry, we also have $u_m=u_n$. This completes the proof that~\ref{item:c}
implies~\ref{item:d}.
\end{proof}

The following decidability result may now be easily deduced.
\begin{corollary}\label{corol:LGkappa_word_problem_dec_rank1}
 Given $\kappa$-terms $\pi$ and $\rho$ of rank at most 1, it is decidable whether ${\bf LG}$ (resp.\ $\Se$) satisfies $\pi=\rho$.
\end{corollary}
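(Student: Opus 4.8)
The plan is to reduce everything to Theorem~\ref{theo:wp_rank1} by first passing to canonical forms. Given $\kappa$-terms $\pi$ and $\rho$ of rank at most $1$ --- which we may assume are written over the finite alphabet $A$ of letters occurring in them --- I would apply the effective procedure of Section~\ref{section:kappa-terms_rank1} to compute $\kappa$-terms $\pi'$ and $\rho'$ in canonical form. Since the $\kappa$-identities $1$--$4$ driving that procedure hold in all finite semigroups, $\Se\models\pi=\pi'$ and $\Se\models\rho=\rho'$, and the same holds with $\LG$ in place of $\Se$; thus, writing $\V$ for either of these pseudovarieties, $\V\models\pi=\rho$ if and only if $\V\models\pi'=\rho'$, and it suffices to decide the latter. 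Observe that a $\kappa$-term has rank exactly $1$ precisely when it contains at least one $(\omega-1)$-power, and that this feature is preserved by each of $1$--$4$; hence $\pi'$ and $\rho'$ are either both words of $A^+$, or both rank-$1$ canonical forms, or one of each.

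For each $k\geq 1$ let $T_k$ be the Rees quotient of $A^+$ by the ideal of words of length greater than $k$, so that $T_k$ is the finite nilpotent semigroup with underlying set $A^{\leq k}\cup\{0\}$ and product given by concatenation, truncated to $0$ as soon as the length exceeds $k$. Since $E(T_k)=\{0\}$ is the minimal ideal of $T_k$, Proposition~\ref{prop:caracterization_local_group} gives $T_k\in\LG$. Now suppose $\pi'$ and $\rho'$ are both words of $A^+$ and set $k=\max\{|\pi'|,|\rho'|\}$: the canonical projection $A^+\to T_k$ maps distinct words of length at most $k$ to distinct elements, so $\Se\models\pi'=\rho'$ iff $\LG\models\pi'=\rho'$ iff $\pi'\equiv\rho'$, a decidable syntactic test. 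Suppose next that exactly one of $\pi',\rho'$ is a word, say $\pi'\in A^+$ and $\rho'=u_0x_1^{\omega+q_1}u_1\cdots x_m^{\omega+q_m}u_m$ with $m\geq 1$ and $q_i\in\Z$; take $k=|\pi'|$. In $T_k$ every element has $\omega$-power $0$ and the maximal subgroup at $0$ is trivial, so each factor $x_i^{\omega+q_i}$ is evaluated to $0$ and, $0$ being absorbing, $\rho'$ is evaluated to $0$ in $T_k$, whereas $\pi'$ (a word of length $k$) is evaluated to itself, which is not $0$. Hence $T_k\not\models\pi'=\rho'$, so $\LG\not\models\pi'=\rho'$ and therefore also $\Se\not\models\pi'=\rho'$; in this case the answer is negative.

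There remains the case in which $\pi'$ and $\rho'$ are both rank-$1$ canonical forms. Here Theorem~\ref{theo:wp_rank1} applies directly and shows that $\Se\models\pi'=\rho'$, $\LG\models\pi'=\rho'$, and ``$\pi'$ and $\rho'$ are the same $\kappa$-term'' are all equivalent; so again the question reduces to the decidable test $\pi'\equiv\rho'$. Combining the three cases gives the desired algorithm, which incidentally decides the two conditions $\Se\models\pi=\rho$ and $\LG\models\pi=\rho$ simultaneously, since they always have the same truth value.

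The work is essentially done by the canonical-form algorithm of Section~\ref{section:kappa-terms_rank1} and by Theorem~\ref{theo:wp_rank1}. The only point not handled by the theorem is the mixed case where, after normalizing, one term is a plain word and the other genuinely involves an $(\omega-1)$-power; the main --- and only slightly delicate --- observation there is that such a genuine rank-$1$ $\kappa$-term is collapsed to $0$ by every finite nilpotent semigroup and hence, because all these semigroups lie in $\LG$, cannot be equal over $\LG$ (a fortiori over $\Se$) to any word.
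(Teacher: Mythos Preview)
Your proof is correct and follows the same overall strategy as the paper: compute canonical forms and apply Theorem~\ref{theo:wp_rank1} for the genuine rank~$1$ case. The only difference lies in the treatment of the degenerate cases where one or both terms have rank~$0$. The paper dispatches these in one line by invoking the fact that $\LI\subseteq\LG$ separates distinct finite words and also separates finite words from rank~$1$ $\kappa$-terms; you instead build the explicit nilpotent Rees quotients $T_k=A^+/A^{>k}$, check via Proposition~\ref{prop:caracterization_local_group} that they lie in $\LG$, and use them directly as separating semigroups. Since these $T_k$ are precisely locally trivial semigroups witnessing the separations the paper cites, your argument is a concrete unpacking of the paper's, trading a one-line appeal to $\LI$ for a short self-contained computation.
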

\begin{proof} Since $\LG$ contains the
pseudovariety $\LI$ and $\LI$ separates
 different finite words as well as finite words from rank 1 $\kappa$-terms,  if one of $\pi$ and $\rho$ is a finite word
then they both are the same finite word. So, we assume that
 $\pi$ and $\rho$ are both rank 1 \kt s. In view of Theorem~\ref{theo:wp_rank1}, to decide whether $\LG$
(resp.\ $\Se$) verifies $\pi=\rho$ it suffices to compute the canonical forms $\pi'$ and $\rho'$ of $\pi$ and
$\rho$, respectively, and to verify if $\pi'$ and $\rho'$ are the same $\kappa$-term.
\end{proof}

%%%%%%%%%%%%%%%%%%%%%%%%%%%%%%%%%%%%%%%%%%%%%%%%%%%%%%%%%%%%%%%%%%%%%%%%%%%%%%%%
%%%%%%%%%%%%%%%%%%%%%%%%%%%%%%%%%%%%%%%%%%%%%%%%%%%%%%%%%%%%%%%%%%%%%%%%%%%%%%%%%%%%%%%%%%%%%%%%%%%%%%%%%
\section{An alternative version of Theorem~\ref{theo:wp_rank1} }\label{section:alternative proof}
By way of comparison, we present an alternative version of Theorem~\ref{theo:wp_rank1}. This second version
contains some adjustments in the construction of the group identity ${\mathsf w}_\pi={\mathsf w}_\rho$ and of
the test-semigroup $S_{\pi,\rho}$ associated with a given $\kappa$-identity $\pi=\rho$. Although
in~\cite{Costa&Nogueira&Teixeira:2013} we will follow the first scheme to complete the proof of the decidability
of the word problem for $\kappa$-terms over ${\bf LG}$, this new construction is less tricky and has some
advantages that may be exploited in future work. Here, the test-semigroup $S_{\pi,\rho}$ is of the form
$S_k(G,{\mathsf f})$ and the group identity ${\mathsf w}_\pi={\mathsf w}_\rho$ involves $-1$ exponents and is not reduced in the free group. Instead of $S_k(G,{\mathsf f})$, we could use a semigroup $M_k(G,{\hbar})$ with exactly the same effect. Actually, the group identity ${\mathsf w}_\pi={\mathsf w}_\rho$ is the one that would emerge if one would use the representation of the free pro-$(\G*\D_k)$ semigroups obtained by Almeida and Azevedo~\cite{Almeida&Azevedo:1993}.

We let
$\pi=u'_0x_1^{\omega+q_1}u_1 \cdots x_m^{\omega+q_m}u_m$ and $\rho=u'_mx_{m+1}^{\omega+q_{m+1}}u_{m+1} \cdots
x_n^{\omega+q_n}u_n$ be rank 1 \kt s in canonical form and assume the same notations $I_n=\{1,\ldots,n\}$,
$I_{m,n}=I_n\setminus\{ m,n\}$, $\ell_i=|x_i|$, $x_i=a_{i1}a_{i2}\cdots a_{i\ell_i}$ and $x_{ij}=a_{ij}\cdots
a_{i\ell_i}a_{i1}\cdots a_{i\;\!j-1}$ of the previous section.  Now, let $\mathbb{k}\in\N$ be such that
$\mathbb{k}+1=r\ell_1\ell_2\cdots \ell_n$, with $r>1+\mbox{max}\{|u'_0|,|u'_m|, |u_j|,|q_j|:j\in I_n\}$, and let
$r_i=r\ell_1\cdots\ell_{i-1}\ell_{i+1}\cdots \ell_n=\frac{\mathbb{k}+1}{\ell_i}$. Denote at last
$b_{ij}=x_{ij}^{r_i}$, $b'_{i1}={\mathtt i}_\mathbb{k}(b_{i1})=x_{i}^{r_i-1}a_{i1}a_{i2}\cdots
a_{i\;\!\ell_i-1}$ and $b''_{i1}={\mathtt t}_\mathbb{k}(b_{i1})=a_{i2}a_{i3}\cdots
a_{i\;\!\ell_i}x_{i}^{r_i-1}$.

We define first the group identity ${\mathsf w}_\pi={\mathsf
w}_\rho$. Let  $L_\mathbb{k}=A^{\leq \mathbb{k}}$, whence $\ddot
L_\mathbb{k}=A^{\mathbb{k}+1}$. We associate to each word $u\in
\ddot L_\mathbb{k}$ a variable ${\mathsf v}_u$, let ${\mathsf
V}=\{{\mathsf v}_u\mid u\in \ddot L_\mathbb{k}\}$ and denote by
$F_{\mathsf V}$ the free group generated by the alphabet ${\mathsf
V}$. Let $\lambda:A^+\rightarrow F_{\mathsf V}$ be the
$\mathbb{k}$-superposition homomorphism given by
$\lambda(u)={\mathsf v}_u$ for every $u\in \ddot L_\mathbb{k}$,
whence  for a word $v\in A^+\setminus L_\mathbb{k}$,
$\lambda(v)={\mathsf v}_{v_1}{\mathsf v}_{v_2}\cdots {\mathsf
v}_{v_{|v|-\mathbb{k}}}$ where
$v_1,v_2,\ldots,v_{|v|-\mathbb{k}}\in\ddot L_{\mathbb k}$ are the
successive factors of length $\mathbb{k}+1$ of $v$. Now, let
$${\mathsf w}_\pi={\mathsf w}'_0{\mathsf y}_1^{q_1-r_1}{\mathsf w}_1 \cdots {\mathsf y}_m^{q_m-r_m}{\mathsf w}_{m}\quad\mbox{and}\quad
 {\mathsf w}_\rho={\mathsf w}'_{m}{\mathsf y}_{m+1}^{q_{m+1}-r_{m+1}}{\mathsf w}_{m+1} \cdots
{\mathsf y}_n^{q_n-r_n}{\mathsf w}_n$$ where, for $i\in I_n$ and
$j\in\{0,m\}$, ${\mathsf y}_i=\lambda(x_{i1}b'_{i1})$, ${\mathsf
w}'_j=\lambda(u'_jb'_{j+1\;\!j+1})$, ${\mathsf
w}_i=\lambda(b_{i1}u_i)$ if $i\in\{m,n\}$ and ${\mathsf
w}_i=\lambda(b_{i1}u_ib'_{i+1\;\!1})$ otherwise. We denote by
$\widetilde{\mathsf w}_\pi$ and $\widetilde{\mathsf w}_\rho$,
respectively, the reduced forms of ${\mathsf w}_\pi$ and ${\mathsf
w}_\rho$ in $F_{\mathsf V}$.

Let us now define the test-semigroup $S_{\pi,\rho}$. Choose a group homomorphism $\eta:F_{\mathsf V}\rightarrow
G$ into a finite group $G$  such  that $\eta({\mathsf w}_\pi)\neq \eta({\mathsf w}_\rho)$ when
$\widetilde{\mathsf w}_\pi\neq \widetilde{\mathsf w}_\rho$. Then let ${\mathsf f}:L_\mathbb{k}\cup\ddot
L_\mathbb{k}\rightarrow G$ be the mapping defined by ${\mathsf f} (L_\mathbb{k})=\{1_G\}$ and ${\mathsf f}
(u)=\eta({\mathsf v}_{u})=\eta(\lambda(u))$ for each $u\in \ddot L_\mathbb{k}$. Finally, we let
$S_{\pi,\rho}=S_\mathbb{k}(G,{\mathsf f})$. Recall that  condition ${\mathsf f} (L_\mathbb{k})=\{1_G\}$ makes
the function $\hat{\mathsf f}$ define a $\mathbb{k}$-superposition homomorphism from $A^+$ into $G$ (the
$\mathbb{k}$-superposition property is a particular case of the property presented in
Lemma~\ref{lemma:property_hat_function}~\ref{eq:property_hat_function}).

The announced alternative version of Theorem~\ref{theo:wp_rank1} is the following.
\begin{theorem}\label{theo2:wp_rank1}
Under the above assumptions and with the above notations, the following conditions are equivalent:
\begin{enumerate}
\item\label{item:a} $\Se\models\pi=\rho$.

\item\label{item:b} $\LG\models\pi=\rho$.

\item\label{item:c} $S_{\pi,\rho}\models\pi=\rho$.

\item\label{item:d} $\widetilde{\mathsf w}_\pi=\widetilde{\mathsf w}_\rho$, $u'_0=u'_m$ and
$u_m=u_n$.

\item\label{item:e} $\pi$ and $\rho$ are the same \kt.
\end{enumerate}
\end{theorem}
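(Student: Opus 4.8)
The plan is to prove the cycle $(a)\Rightarrow(b)\Rightarrow(c)\Rightarrow(d)\Rightarrow(e)\Rightarrow(a)$, following the pattern of Theorem~\ref{theo:wp_rank1}. Three of the five implications are immediate: $(a)\Rightarrow(b)$ because $\LG\subseteq\Se$; $(b)\Rightarrow(c)$ because, by Corollary~\ref{corol:semigroups_S_Z_T}, $S_{\pi,\rho}=S_\mathbb{k}(G,{\mathsf f})$ is a finite local group, hence lies in $\LG$; and $(e)\Rightarrow(a)$ trivially. (One may also invoke Theorem~\ref{theo:wp_rank1}, $(a)\Leftrightarrow(e)$, to get $(a)\Leftrightarrow(e)$ for free, since canonical forms are unique modulo $\Se$.) So the work is concentrated in $(c)\Rightarrow(d)$ and $(d)\Rightarrow(e)$.

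For $(c)\Rightarrow(d)$ I would rerun, for the new test-semigroup and the new word identity, the computation of the proof of Theorem~\ref{theo:wp_rank1}. Let $\phi:T_A^\kappa\rightarrow S_{\pi,\rho}$ be the homomorphism of $\kappa$-semigroups agreeing with $\varphi\circ\check{\mathsf f}$ on $A^+$, where $\varphi$ is the isomorphism $\mathcal{Z}[G,L_\mathbb{k},{\mathsf f}]\to S_{\pi,\rho}$. Because $\mathbb{k}+1=r\ell_1\cdots\ell_n$ is a multiple of each $\ell_i$, the word $b_{i1}=x_i^{r_i}$ has length $\mathbb{k}+1$, so it belongs to $\ddot L_\mathbb{k}$ with $(b_{i1})_\alpha=b'_{i1}$, $(b_{i1})_\omega=b''_{i1}$ and $b_{i1}=a_{i1}b''_{i1}$, and the successive length-$(\mathbb{k}+1)$ factors of $x_i^{+\infty}$ cycle through $b_{i1},\dots,b_{i\ell_i}$. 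Since $\hat{\mathsf f}$ restricted to $A^+$ is the $\mathbb{k}$-superposition homomorphism $\eta\circ\lambda$ (a special case of Lemma~\ref{lemma:property_hat_function}\,\ref{eq:property_hat_function}) and ${\mathsf y}_i=\lambda(x_{i1}b'_{i1})={\mathsf v}_{b_{i1}}\cdots{\mathsf v}_{b_{i\ell_i}}$, one computes the kernel of $\langle\phi(x_i)\rangle$ inside the minimal ideal $\mathcal{M}[G;L_\mathbb{k}^1,L_\mathbb{k}^1;P]$ and, by locating its idempotent power (the $\mathcal{H}$-class index running modulo the order of $\eta({\mathsf y}_i)$), obtains
$$\phi(x_i^{\omega+q_i})=\bigl(\,b'_{i1},\ \eta({\mathsf y}_i)^{q_i-r_i}\,\eta({\mathsf v}_{b_{i1}}),\ b''_{i1}\,\bigr).$$
Next I would multiply these up along $\pi$ and $\rho$. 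In $\mathcal{M}[G;L_\mathbb{k}^1,L_\mathbb{k}^1;P]$ the structure constants are $P_{s,t}=\hat{\mathsf f}(st)$, so inserting a block $u_i$ between two consecutive powers contributes, after using the $\mathbb{k}$-superposition identity to merge the right multiplication by $u_i$ with the ensuing sandwich, exactly $\hat{\mathsf f}(b''_{i1}u_ib'_{i+1\,1})=\eta({\mathsf v}_{b_{i1}})^{-1}\eta({\mathsf w}_i)$ (here one uses $b_{i1}=a_{i1}b''_{i1}$ together with the definition ${\mathsf w}_i=\lambda(b_{i1}u_ib'_{i+1\,1})$, resp.\ ${\mathsf w}_i=\lambda(b_{i1}u_i)$ at the ends). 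The factors $\eta({\mathsf v}_{b_{i1}})$ then telescope against the trailing $\eta({\mathsf v}_{b_{i1}})$ of $\phi(x_i^{\omega+q_i})$, and the two outer boundary pieces produce $\eta({\mathsf w}'_0)$ and $\eta({\mathsf w}_m)$; the degenerate cases in which $u'_0$, $u_m$ or some $u_i$ is empty are absorbed uniformly, since $\lambda(L_\mathbb{k})=\{1\}$. The outcome is
$$\phi(\pi)=\bigl(\,{\mathtt i}_\mathbb{k}(u'_0b'_{11}),\ \eta({\mathsf w}_\pi),\ {\mathtt t}_\mathbb{k}(b''_{m1}u_m)\,\bigr),\qquad \phi(\rho)=\bigl(\,{\mathtt i}_\mathbb{k}(u'_mb'_{m+1\,1}),\ \eta({\mathsf w}_\rho),\ {\mathtt t}_\mathbb{k}(b''_{n1}u_n)\,\bigr).$$
Since $S_{\pi,\rho}\models\pi=\rho$ these triples agree: equality of the middle coordinates gives $\eta({\mathsf w}_\pi)=\eta({\mathsf w}_\rho)$, hence $\widetilde{\mathsf w}_\pi=\widetilde{\mathsf w}_\rho$ by the choice of $\eta$; equality of the first and last coordinates, together with the fact that $x_1,x_{m+1},x_m,x_n$ are Lyndon words and that $\pi,\rho$ are in canonical form, forces $u'_0=u'_m$ and $u_m=u_n$, exactly as in Theorem~\ref{theo:wp_rank1}. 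This is $(d)$.

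Finally, $(d)\Rightarrow(e)$. Unlike in Theorem~\ref{theo:wp_rank1}, where the analogue of $(d)$ records the data of $\pi$ letter by letter, here $(d)$ is phrased with the \emph{reduced} word $\widetilde{\mathsf w}_\pi$ of $F_{\mathsf V}$, and one must argue that $\widetilde{\mathsf w}_\pi$ (together with $u'_0$ and $u_m$) still determines the canonical form $\pi$. The key is that the requirement $r>1+\max\{|u'_0|,|u'_m|,|u_j|,|q_j|:j\in I_n\}$ makes each exponent $q_i-r_i$ large in absolute value, while ${\mathsf y}_i={\mathsf v}_{b_{i1}}\cdots{\mathsf v}_{b_{i\ell_i}}$ is cyclically reduced and not a proper power, because $x_i$ is a Lyndon word, hence primitive, so $b_{i1},\dots,b_{i\ell_i}$ are pairwise distinct. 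Consequently each block ${\mathsf y}_i^{q_i-r_i}$ survives in $\widetilde{\mathsf w}_\pi$ up to a bounded amount of cancellation at its two junctions, and conditions \ref{item:2:normalform1}--\ref{item:3:normalform1} of Definition~\ref{def:normalform1} are precisely what confines that cancellation to the one variable already visible in ${\mathsf w}_\pi$. From $\widetilde{\mathsf w}_\pi$ one then recovers the periods $\ell_i$, then (since every occurring variable has the form ${\mathsf v}_{x_{ij}^{r_i}}$) the Lyndon words $x_i$ and the integers $r_i$, hence the $q_i$, and finally the words $u_i$; together with $u'_0=u'_m$ and $u_m=u_n$ this yields $\pi=\rho$, i.e.\ $(e)$. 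The main obstacle is the index bookkeeping in $(c)\Rightarrow(d)$ — checking that the telescoping Rees-matrix products exactly reproduce the prescribed shapes of ${\mathsf y}_i$, ${\mathsf w}_i$, ${\mathsf w}'_0$, ${\mathsf w}'_m$, and disposing of the degenerate situations; the cancellation analysis needed for $(d)\Rightarrow(e)$ is a second, lighter source of effort, governed by the Lyndon and canonical-form hypotheses in the same spirit as the corresponding step of Theorem~\ref{theo:wp_rank1}.
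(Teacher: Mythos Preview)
Your proposal is correct and follows essentially the same approach as the paper: the cycle of implications, the Rees-matrix computation yielding $\phi(\pi)=({\mathtt i}_\mathbb{k}(u'_0b_{11}),\eta({\mathsf w}_\pi),{\mathtt t}_\mathbb{k}(b_{m1}u_m))$, and the cancellation analysis for $(d)\Rightarrow(e)$ all match. The only cosmetic differences are that the paper reaches the formula for $\phi(x_i^{\omega+q_i})$ by substituting $\omega\mapsto p!$ and using $x^{p!}=1$ in $G$ rather than locating the idempotent directly, and that it first reads $x_1=x_{m+1}$ off the group identity before invoking the first Rees coordinate to conclude $u'_0=u'_m$; for $(d)\Rightarrow(e)$ it spells out the factorization ${\mathsf w}_j=\lambda(b_{j1}x'_j)\,\lambda(z''_ju_jz'_{j+1})\,\lambda(x''_{j+1}b'_{j+1\,1})$ to pin down exactly which letters cancel, which is the bookkeeping your sketch compresses.
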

\begin{proof} The implications~\ref{item:e}$\Rightarrow$\ref{item:a}$\Rightarrow$\ref{item:b}$\Rightarrow$\ref{item:c} hold
trivially. To prove the implication~\ref{item:c}$\Rightarrow$\ref{item:d}, assume that
$S_{\pi,\rho}\models\pi=\rho$ and let  $\phi_\mathbb{k}: A^+\rightarrow S_\mathbb{k}(G,{\mathsf f})$ be the
homomorphism $\phi_\mathbb{k}=\varphi\circ \check{\mathsf f}$, where $\varphi$ is the isomorphism from
$\mathcal{Z}[G,L_\mathbb{k},{\mathsf f}]$ onto $S_{\pi,\rho}$ defined in
Section~\ref{Presentations-local-groups}, and notice that $\phi_\mathbb{k}(w)=({\mathtt
i}_\mathbb{k}(w),\hat{\mathsf f}(w),{\mathtt t}_\mathbb{k}(w))$ for $w\in A^+\setminus L_\mathbb{k}$. We use the
same notation for the homomorphism of $\kappa$-semigroups $T_A^\kappa\rightarrow S_{\pi,\rho}$ that coincides
with $\phi_\mathbb{k}$ on $A^+$.

 Since $S_{\pi,\rho}$ is a finite semigroup, it verifies the
identity $x^{p!}=x^\omega$ for every sufficiently large positive integer $p$. Take one such $p$ with
$p!+q_i\geq\mathbb{k}+1$ for all $i\in I_n$. Then, $\phi_\mathbb{k}(\pi)=\phi_\mathbb{k}(\pi_p)$ and
$\phi_\mathbb{k}(\rho)=\phi_\mathbb{k}(\rho_p)$ where $\pi_p$ and $\rho_p$ are the words from  $A^+$ obtained,
respectively, from $\pi$ and $\rho$ by replacing each occurrence of $\omega$ by $p!$. Since $\hat{\mathsf f}$ is
a $\mathbb{k}$-superposition homomorphism, $\phi_\mathbb{k}(x_i^{p!+q_i})=(b'_{i1},({\mathsf f}(b_{i1}){\mathsf
f}(b_{i2})\cdots {\mathsf f}(b_{i\ell_i}))^{p!+q_i-r_i}{\mathsf f}(b_{i1}),b''_{i1})$. On the other hand
$({\mathsf f}(b_{i1}){\mathsf f}(b_{i2})\cdots {\mathsf f}(b_{i\ell_i}))^{p!+q_i-r_i}=({\mathsf
f}(b_{i1}){\mathsf f}(b_{i2})\cdots {\mathsf f}(b_{i\ell_i}))^{q_i-r_i}$ because $G$ is a subgroup of
$S_{\pi,\rho}$ and, so, it verifies the identity $x^{p!}=1$. Using again the fact that $\hat{\mathsf f}$ is a
$\mathbb{k}$-superposition homomorphism, it follows that
$$\phi_\mathbb{k}(\pi)=\phi_\mathbb{k}(\pi_p)=({\mathtt i}_\mathbb{k}(u'_0b_{11}),w'_0y_1^{q_1-r_1}w_1 \cdots
 y_m^{q_m-r_m}w_{m},{\mathtt t}_\mathbb{k}(b_{m1}u_m))$$
where $y_i={\mathsf f}(b_{i1}){\mathsf f}(b_{i2})\cdots {\mathsf f}(b_{i\ell_i})=\hat{\mathsf
f}(x_{i1}b'_{i1})$, $w'_0=\hat{\mathsf f}(u'_0b'_{11})$, $w_m={\mathsf f}(b_{m1})\hat{\mathsf
f}(b''_{m1}u_m)=\hat{\mathsf f}(b_{m1}u_m)$ and, for $i\neq m$, $w_i={\mathsf f}(b_{i1})\hat{\mathsf
f}(b''_{i1}u_ib'_{i+1\;\!1})=\hat{\mathsf f}(b_{i1}u_ib'_{i+1\;\!1})$. Analogously,
$$\phi_\mathbb{k}(\rho)=\phi_\mathbb{k}(\rho_p)=({\mathtt i}_\mathbb{k}(u'_mb_{m+1\;\!1}),
w'_my_{m+1}^{q_{m+1}-r_{m+1}}w_{m+1} \cdots y_n^{q_n-r_n}w_{n},{\mathtt t}_\mathbb{k}(b_{n1}u_n)).$$

As one may confirm easily, the following equalities hold
$$\begin{array}{l}
\eta({\mathsf w}_\pi)=w'_0y_1^{q_1-r_1}w_1 \cdots y_m^{q_m-r_m}w_{m}=\hat{\mathsf f}(\pi_p),\\
\eta({\mathsf w}_\rho)=w'_my_{m+1}^{q_{m+1}-r_{m+1}}w_{m+1} \cdots y_n^{q_n-r_n}w_{n}=\hat{\mathsf f}(\rho_p).
\end{array}$$
By hypothesis $S_{\pi,\rho}$ verifies $\pi=\rho$, whence $\phi_\mathbb{k}(\pi)=\phi_\mathbb{k}(\rho)$. Thus
${\mathtt i}_\mathbb{k}(u'_0b_{11})={\mathtt i}_\mathbb{k}(u'_mb_{m+1\;\!1})$, $\eta({\mathsf
w}_\pi)=\eta({\mathsf w}_\rho)$ and ${\mathtt t}_\mathbb{k}(b_{m1}u_m)={\mathtt t}_\mathbb{k}(b_{n1}u_n)$. The
second condition $\eta({\mathsf w}_\pi)=\eta({\mathsf w}_\rho)$ and the definition of $\eta$ imply  that
$\G\models {\mathsf w}_\pi={\mathsf w}_\rho$ and, so, that the identity $\widetilde{\mathsf
w}_\pi=\widetilde{\mathsf w}_\rho$ is trivial. In particular ${\mathsf w}'_0{\mathsf y}_1^{q_1-r_1}={\mathsf
w}'_m{\mathsf y}_{m+1}^{q_{m+1}-r_{m+1}}$. Since $r$ may be chosen arbitrarily large the same happens with every
$r_i$. Hence, each exponent $q_i-r_i$ is negative, with absolute value as large as we want. Therefore, as $\pi$
and $\rho$ are in canonical form, one deduces easily that ${\mathsf w}'_0={\mathsf w}'_m$
 and ${\mathsf y}_1={\mathsf y}_{m+1}$ and then that $x_1=x_{m+1}$. Now, the first condition ${\mathtt
i}_\mathbb{k}(u'_0b_{11})={\mathtt i}_\mathbb{k}(u'_mb_{m+1\;\!1})$ above immediately implies that $u'_0=u'_m$.
Symmetrically, one deduces that $u_m=u_n$. This shows the implication~\ref{item:c}$\Rightarrow$\ref{item:d}.

Let us now prove that~\ref{item:d} implies \ref{item:e}. Assume that  $\widetilde{\mathsf
w}_\pi=\widetilde{\mathsf w}_\rho$ is a trivial identity and that $u'_0=u'_m$ and  $u_m=u_n$. For $j\in I_{m,n}$
let $u_j=x'_jv''_j=v'_jx''_{j+1}$ be two factorizations of $u_j$ where $x'_j$ (resp.\ $x''_{j+1}$) is the
longest common prefix (resp.\ suffix) of $u_j$ and $x_j$ (resp.\ $x_{j+1}$).  Consider  the words
$z''_j={\mathtt t}_{\mathbb{k}-|x'_j|}(x_j^\mathbb{k})$ and $z'_{j+1}={\mathtt
i}_{\mathbb{k}-|x''_{j+1}|}(x_{j+1}^\mathbb{k})$. With this notation and since $\lambda$ is a
$\mathbb{k}$-superposition homomorphism, we have
$$\begin{array}{rl}
{\mathsf w}_j&\hspace*{-3mm}=\lambda(b_{j1}x'_jv''_jb'_{j+1\;\!1})=\lambda(b_{j1}x'_{j})\lambda({\mathtt t}_\mathbb{k}(b_{j1}x'_{j})v''_j b'_{j+1\;\!1})\\[1mm]
 &\hspace*{-3mm}=\lambda(b_{j1}x'_{j})\lambda(z''_jx'_jv''_j b'_{j+1\;\!1})=\lambda(b_{j1}x'_{j})\lambda(z''_jv'_jx''_{j+1} b'_{j+1\;\!1})\\[1mm]
&\hspace*{-3mm}=\lambda(b_{j1}x'_{j})\lambda(z''_ju_jz'_{j+1})\lambda(x''_{j+1}b'_{j+1\;\!1}).
\end{array}$$
By the choice of $x'_j$, $\lambda(b_{j1}x'_{j})$ is the longest prefix of ${\mathsf w}_j$ that is canceled by
${\mathsf y}_j^{q_j-r_j}$ in the reduction process of ${\mathsf w}_\pi$ (if $j<m$) and ${\mathsf w}_\rho$
(otherwise). Dually, $\lambda(x''_{j+1}b'_{j+1\;\!1})$ is the longest suffix of ${\mathsf w}_j$ that is canceled
by ${\mathsf y}_{j+1}^{q_{j+1}-r_{j+1}}$. Moreover, $\lambda(b_{j1}x'_{j})$ and
$\lambda(x''_{j+1}b'_{j+1\;\!1})$ are words over the alphabet ${\mathsf V}$ of length, respectively, $|x'_j|+1$
and $|x''_{j+1}|$ and so less than or equal to $|u_j|$. Since each exponent $q_i-r_i$ is negative and with a
large absolute value, we may assume that some (large) factor ${\mathsf y}'_j$ of ${\mathsf y}_j^{q_j-r_j}$
remains in the reduced form. On the other hand, $\lambda(z''_ju_jz'_{j+1})\neq 1$ since
$|z''_ju_jz'_{j+1}|>\mathbb{k}$. Whence, the reduced forms $\widetilde{\mathsf w}_\pi$ and $\widetilde{\mathsf
w}_\rho$ have precisely, respectively, $m-1$ and $n-m-1$ factors of ${\mathsf V}^+$ that are placed between
factors of $({\mathsf V}^{-1})^+$. Since $\widetilde{\mathsf w}_\pi=\widetilde{\mathsf w}_\rho$ by hypothesis,
it follows that $m=n-m$ and, for every $j\in\{1,\ldots,m-1\}$,
$\lambda(z''_ju_jz'_{j+1})=\lambda(z''_{m+j}u_{m+j}z'_{m+j+1})$, whence
$z''_ju_jz'_{j+1}=z''_{m+j}u_{m+j}z'_{m+j+1}$. As $\mathbb{k}$ (and, so, also the words $z''_j$, $z'_{j+1}$,
$z''_{m+j}$ and $z'_{m+j+1}$) is arbitrarily large, one deduces that the bi-infinite words
$\infee{x}{j}u_j\infdd{x}{j+1}$ and $\infee{x}{m+j}u_{m+j}\infdd{x}{m+j+1}$ coincide. Hence, as a consequence of
the hypothesis  $u_m=u_n$ and of the rank 1 canonical form definition,  for every $j\in\{1,\ldots,m\}$,
$x_j=x_{m+j}$ and $u_j=u_{m+j}$, and thus $w_j=w_{m+j}$, $y_j=y_{m+j}$ and $r_j=r_{m+j}$. To
 deduce that $\pi$ and $\rho$ coincide, and since by hypothesis $u'_0=u'_m$, it remains to prove that $q_j=q_{m+j}$
 for every $j\in\{1,\ldots,m\}$. But that is immediate after the above equalities and the fact that  $\widetilde{\mathsf w}_\pi=\widetilde{\mathsf
 w}_\rho$. This shows the implication~\ref{item:d}$\Rightarrow$\ref{item:e} and concludes the proof of Theorem~\ref{theo2:wp_rank1}.
\end{proof}

%%%%%%%%%%%%%%%%%%%%%%%%%%%%%%%%%%%%%%%%%%%%%%%%%%%%%%%%%%%%%%%%%%%%%%%%%%%%%%%%%%%%%%%%%%%%%%%%%%%%%%%%%
\paragraph{\textbf{Final remarks.}}

The semigroups $\mathcal{S}(G,L,{\mathsf f})$ were used above as
test-semigroups to show that the word problem for $\kappa$-terms of
rank at most 1 over ${\bf LG}$ is decidable, and they
 are used in~\cite{Costa&Nogueira&Teixeira:2013} to extend this result to any
 rank. Exploring the fact that $\LH$ is generated by the semigroups $\mathcal{S}(G,L,{\mathsf f})$  with
$G\in\He$ and $L$ finite, we expect that the above method can be
adapted to solve the $\kappa$-word problem over $\LH$, for other
pseudovarieties $\He$ of groups.

%%%%%%%%%%%%%%%%%%%%%%%%%%%%%%%%%%%%%%%%%%%%%%%%%%%%%%%%%%%%%%%%%%%%%%%%%%%%%%%%%%%%%%%%%%%%%
%%%%%%%%%%%%%%%%%%%%%%%%%%%%%%%%%%%%%%%%%%%%%%%%%%%%%%%%%%%%%%%%%%%%%%%%%%%%%%%%%%%%%%%%%%%%
\section*{Acknowledgments}
This work was supported by the European Regional Development Fund, through the programme COMPETE, and by the
Portuguese Government through FCT -- \emph{Funda\c c\~ao para a Ci\^encia e a Tecnologia}, under the project
PEst-C/MAT/UI0013/2011.

%%%%%%%%%%%%%%%%%%%%%%%%%%%%%%%%%%%%%%%%%%%%%%%%%%%%%%%%%%%%%%%%%%%%%%%%%%%%%%%%%%%%%%%%%%%%%
%%%%%%%%%%%%%%%%%%%%%%%%%%%%%%%%%%%%%%%%%%%%%%%%%%%%%%%%%%%%%%%%%%%%%%%%%%%%%%%%%%%%%%%%%%%%

\end{document}